    \newcommand{\Q}{\mathbb{Q}}
    \newcommand{\R}{\mathbb{R}}
    \newcommand{\Cwf}{\mathcal{C}}
    \newcommand{\Iwf}{\mathcal{I}}
    \newcommand{\Jwf}{\mathcal{J}}
    \newcommand{\Kwf}{\mathcal{K}}
    \newcommand{\Mwf}{\mathcal{M}}
    \newcommand{\Nwf}{\mathcal{N}}
    \newcommand{\Pwf}{\mathcal{P}}
    \newcommand{\Swf}{\mathcal{S}}
    \newcommand{\minnon}{\mathrm{minnon}}
   \newcommand{\hgt}{\mathrm{ht}}
    \newcommand{\bfrak}{\mathfrak{b}}
    \newcommand{\cfrak}{\mathfrak{c}}
    \newcommand{\dfrak}{\mathfrak{d}}
    \newcommand{\add}{\mbox{\rm add}}
    \newcommand{\cov}{\mbox{\rm cov}}
    \newcommand{\non}{\mbox{\rm non}}
    \newcommand{\cof}{\mbox{\rm cof}}
    \newcommand{\limdir}{\mbox{\rm limdir}}
    \newcommand{\Ed}{\mathbf{Ed}}
    \newcommand{\Cor}{\mathbb{C}}
    \newcommand{\Loc}{\mathbb{LOC}}
    \newcommand{\Por}{\mathbb{P}}
    \newcommand{\Qor}{\mathbb{Q}}
    \newcommand{\Qnm}{\dot{\mathbb{Q}}}
    \newcommand{\cf}{\mbox{\rm cf}}
\title[On the strong measure zero ideal]{On cardinal characteristics associated with the strong measure zero ideal}
\author{Miguel A. Cardona}
\address{Institute of Discrete Mathematics and Geometry, TU Wien, Wiedner Hauptstrasse 8--10/104 A--1040 Wien, Austria.}
\email{miguel.montoya@tuwien.ac.at}
\urladdr{https://www.researchgate.net/profile/Miguel\_Cardona\_Montoya}
\thanks{This work was supported by the Austrian Science Fund (FWF) P30666. Recipent of a DOC Fellowship of the Austrian Academy of Sciences at the Institute of Discrete Mathematics and Geometry, TU Wien.}
\subjclass[2010]{03E17, 03E35, 03E40.}
\keywords{Strong measure zero sets, cardinal invariants, matrix iteration}
\begin{document}

\newcounter{enuAlph}
\renewcommand{\theenuAlph}{\Alph{enuAlph}}

\makeatletter
\def\@roman#1{\romannumeral #1}
\makeatother

\theoremstyle{plain}
  \newtheorem{theorem}{Theorem}[section]
  \newtheorem{corollary}[theorem]{Corollary}
  \newtheorem{lemma}[theorem]{Lemma}
  \newtheorem{mainlemma}[theorem]{Main Lemma}
  \newtheorem{prop}[theorem]{Proposition}
  \newtheorem{claim}[theorem]{Claim}
  \newtheorem{exer}[theorem]{Exercise}
  \newtheorem{question}[theorem]{Question}
   \newtheorem{fact}[theorem]{Fact}
  \newtheorem{problem}[theorem]{Problem}
  \newtheorem{conjecture}[theorem]{Conjecture}
  \newtheorem{Questions}[theorem]{Questions}
  \newtheorem*{thm}{Theorem}
  \newtheorem{teorema}[enuAlph]{Theorem}
  
  \newtheorem*{defn*}{Definition}
  \newtheorem*{corolario}{Corollary}
\theoremstyle{definition}
  \newtheorem{definition}[theorem]{Definition}
  \newtheorem{example}[theorem]{Example}
  \newtheorem{remark}[theorem]{Remark}
  \newtheorem{notation}[theorem]{Notation}
  \newtheorem{context}[theorem]{Context}

\newcommand{\azul}[1]{{\color{blue}#1}}
\newcommand{\rojo}[1]{{\color{red}#1}}
\newcommand{\tachar}[1]{{\color{red}\sout{#1}}}
\definecolor{amber}{rgb}{1.0,0.49,0.0}

\definecolor{ogreen}{RGB}{107,142,35}

\newcommand{\verde}[1]{{\color{ogreen}#1}}
\newcommand{\amber}[1]{{\color{amber}#1}}

\newcommand{\Fn}{\mathrm{Fn}}
\newcommand{\leqT}{\preceq_{\mathrm{T}}}
\newcommand{\eqT}{\cong_{\mathrm{T}}}
\newcommand{\la}{\langle}
\newcommand{\ra}{\rangle}
\newcommand{\id}{\mathrm{id}}
\newcommand{\Lv}{\mathrm{Lv}}
\newcommand{\sig}{\boldsymbol{\Sigma}}
\newcommand{\spl}{\mathrm{spl}}
\newcommand{\st}{\mathrm{st}}
\newcommand{\suc}{\mathrm{succ}}
\newcommand{\cosig}{\boldsymbol{\Pi}}
\newcommand{\Lb}{\mathrm{Lb}}
\newcommand{\pw}{\mathrm{pw}}
\newcommand{\Lc}{\mathbf{Lc}}

\newcommand{\SNcal}{\mathcal{SN}}
\newcommand{\Fr}{\mathrm{Fr}}
\newcommand{\Dbf}{\mathbf{D}}
\newcommand{\Cbf}{\mathbf{C}}
\newcommand{\Rbf}{\mathbf{R}}
\newcommand{\Sbf}{\mathbf{S}}
\newcommand{\Ibb}{\mathbb{I}}
\newcommand{\PTbb}{\mathbb{PT}}
\newcommand{\Qbb}{\mathbb{Q}}
\newcommand{\Tbb}{\mathbb{T}}
\newcommand{\Scal}{\mathcal{S}}

\newcommand{\sigmaf}{\sigma^f}
\newcommand{\Af}{A^f}

\begin{abstract} Let $\SNcal$ be the strong measure zero $\sigma$-ideal. We prove a result providing bounds for $\cof(\SNcal)$ which implies Yorioka's characterization of the cofinality of the strong measure zero. In addition, we use forcing matrix iterations to construct a model of ZFC that satisfies $\add(\SNcal)=\cov(\SNcal)<\non(\SNcal)<\cof(\SNcal)$. 
\end{abstract}

\maketitle

\section{Introduction}\label{SecIntro}
In this paper we continue the study of \cite{CMR} on the cardinal characteristics of the continuum associated with the ideal of strong measure zero sets. In general these cardinals are defined as follows. Let $\Iwf$ be an ideal on $\Pwf(X)$ containing all the finite subsets of $X$. Define \emph{the cardinal characteristics associated with $\Iwf$} by:
\begin{align*}
    \add(\Iwf)&:=\min\{|\Jwf|:\Jwf\subseteq\Iwf\text{\ and } \bigcup\Jwf\notin\Iwf\} \emph{\ the additivity of $\Iwf$};\\
    \cov(\Iwf)&:=\min\{|\Jwf|:\Jwf\subseteq\Iwf\text{\ and }\bigcup\Jwf=X\} \emph{\ the covering of $\Iwf$};\\
    \non(\Iwf)&:=\min\{|A|:A\subseteq X\text{\ and }A\notin\Iwf\} \emph{\ the uniformity of $\Iwf$};\\
    \cof(\Iwf)&:=\min\{|\Jwf|:\Jwf\subseteq\Iwf\text{\ is cofinal in }\la\Iwf,\subseteq\ra\} \emph{\ the cofinality of $\Iwf$}.
\end{align*}


    
    
Figure \ref{Figaddetc} shows the ``trivial'' inequalities between the cardinal characteristics associated with $\Iwf$.

\begin{figure}[H]
  \begin{center}
    \includegraphics[scale=1.0]{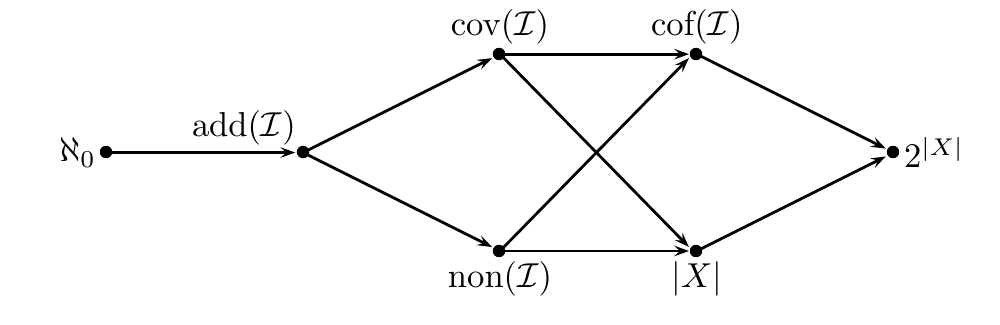}
    \caption{Cardinal characteristics associated with $\Iwf$. An arrow  $\mathfrak x\rightarrow\mathfrak y$ means that (provably) 
    $\mathfrak x\le\mathfrak y$.}
    \label{Figaddetc}
  \end{center}
\end{figure}
\vspace{-0.35cm}
Classical examples are the cardinal characteristics in Cicho\'n's diagram (see Figure \ref{Figcichon}), which is composed by the cardinal characteristics associated with $\Mwf$, $\Nwf$, $\Kwf$ and $\Cwf$, where  $\Mwf$ is the family of meager subsets of $\R$, $\Nwf$ is the family of Lebesgue measure zero subsets of $\R$, $\Kwf$ is the $\sigma$-ideal generated by the subsets of $\R$ whose intersection with $\Q^*$ (the set of irrational numbers) is compact in $\Q^*$, and $\Cwf$ is the $\sigma$-ideal of countable subsets of $\R$. It is known that $\add(\Kwf)=\non(\Kwf)=\bfrak$, $\add(\Cwf)=\non(\Cwf)=\aleph_1$, $\cov(\Kwf)=\cof(\Kwf)=\dfrak$, and $\cov(\Cwf)=\cof(\Cwf)=\cfrak$, where $\bfrak$, $\dfrak$ and $\cfrak$ are the bounding number, dominating number and the size of $\R$, respectively.

Borel \cite{Borel} introduced the notion of \emph{strong measure zero sets} (see Definition \ref{defsmz}). Borel \cite{Borel} conjectured that each subset of the real line that has strong measure zero is countable, which is known as Borel's Conjecture (BC). Sierpi\'nski  \cite{S} showed that the Continuum Hypothesis implies the existence of an uncountable set of real numbers of strong measure zero, and  Laver \cite{LaverBorel} proved the consistency of BC with ZFC by forcing, i.e, BC cannot be proven nor refuted in ZFC.

\begin{figure}
  \begin{center}
    \includegraphics[scale=1.0]{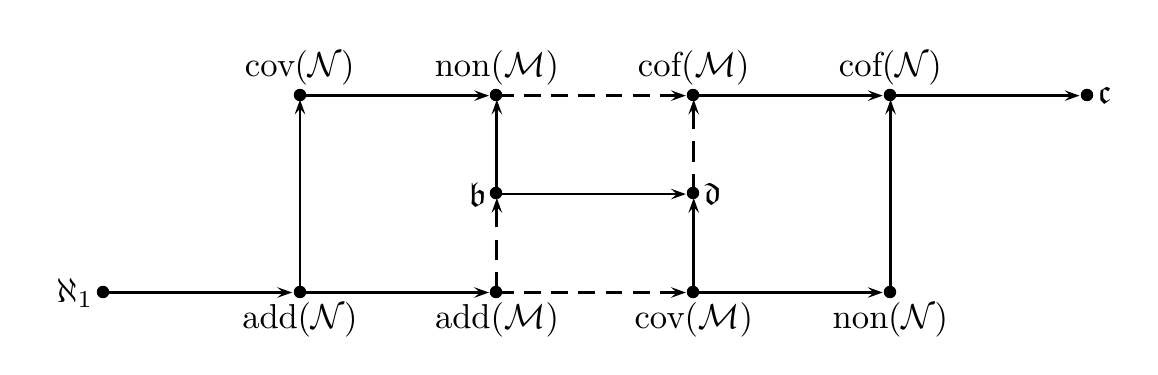}
    \caption{Cicho\'n's diagram.
    $\mathfrak x\rightarrow\mathfrak y$ means that (provably) 
    $\mathfrak x\le\mathfrak y$, and the dashed arrows indicate that 
    $\add(\Mwf)=\min\{\bfrak,\cov(\Mwf)\}$ and $\cof(\Mwf)=\max\{\dfrak,\non(\Nwf)\}$.}
     \label{Figcichon}
  \end{center}
\end{figure}

The cardinal characteristics associated with the ideal of strong measure zero sets have been interesting objects of research, in particular when related to the cardinals in Cicho\'n's diagram.  Denote by $\SNcal$ the ideal of strong measure zero subsets of $\R$.

The following holds in ZFC:
\begin{enumerate}[({S}1)]
    \item  (Carlson \cite{Carlson}) $\add(\Nwf)\leq\add(\SNcal)$,
    \item  $\cov(\Nwf)\leq\cov(\SNcal)\leq\cfrak$ ,
    \item (Miller \cite{Miller}) $\cov(\Mwf)\leq\non(\SNcal)\leq\non(\Nwf)$ and $\add(\Mwf)=\min\{\bfrak,\non(\SNcal)\}$,
    \item  (\cite{Osuga}) $\cof(\SNcal)\leq 2^\dfrak$.
\end{enumerate}
On the other hand, the following inequalities are \emph{consistent with ZFC}:
\begin{enumerate}[({C}1)]
    \item (Goldstern, Judah and Shelah \cite{GJS}) $\cof(\Mwf)<\add(\SNcal)$ ,
    \item (Pawlikowski \cite{P90}) $\cov(\SNcal)<\add(\Mwf)$, 
    \item  $\cfrak<\cof(\SNcal)$ (follows from CH),
    \item (\cite{Yorioka}) $\cof(\SNcal)<\cfrak$,
    \item (\cite{CMR}) $\non(\SNcal)<\cov(\SNcal)<\cof(\SNcal)$.
    \item (\cite{CMR}) $\cof(\Nwf)<\cov(\SNcal)$.
\end{enumerate}

Yorioka introduced a characterization of $\SNcal$ in terms of $\sigma$-ideals $\Iwf_f$, parametrized by functions $f\in\omega^\omega$, which we call \textit{Yorioka ideals} (see Definition \ref{defidealyorioka}). More concretely, $\SNcal=\bigcap\{\Iwf_f\ |\ f\in\omega^\omega\textrm{\ increasing}\}$ and $\Iwf_f\subseteq\Nwf$. Figure \ref{Extended} summarizes the relationship between the cardinal invariants associated with Yorioka ideals and the cardinals in Cicho\'n's diagram, see e.g. \cite{ KO,Osuga, CM}. 

Yorioka also gave a important description of $\cof(\SNcal)$, namely

\begin{theorem}[{\cite[Thm. 2.6]{Yorioka}}]\label{yoriokachac}
If $\add(\Iwf_f)=\cof(\Iwf_f)=\kappa$ for all increasing $f\in\omega^\omega$ then $\cof(\SNcal)=\dfrak_\kappa$ (the \textit{dominating number of} $\kappa^\kappa$)\footnote{Original statement abreviated thanks to Figure \ref{Extended}}.
\end{theorem}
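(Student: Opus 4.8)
The plan is to establish a Tukey equivalence $(\SNcal,\subseteq)\eqT(\kappa^\kappa,\le)$, where $\le$ is coordinatewise domination, since $\cof(\SNcal)$ then equals $\cof(\kappa^\kappa,\le)=\dfrak_\kappa$ by definition of the dominating number. First I would fix the combinatorial scaffolding. The additivity of an ideal is always regular, so $\kappa$ is regular; and from the inequalities recorded in Figure~\ref{Extended}, namely $\add(\Iwf_f)\le\bfrak$ and $\dfrak\le\cof(\Iwf_f)$, the hypothesis forces $\kappa=\add(\Iwf_f)\le\bfrak\le\dfrak\le\cof(\Iwf_f)=\kappa$, hence $\bfrak=\dfrak=\kappa$. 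As $\kappa$ is regular I may therefore fix a $\le^*$-increasing dominating scale $\langle f_\xi:\xi<\kappa\rangle$ of increasing functions in $\omega^\omega$; by monotonicity of the Yorioka ideals ($f\le^* g\Rightarrow\Iwf_g\subseteq\Iwf_f$) together with $\SNcal=\bigcap_f\Iwf_f$ this yields $\SNcal=\bigcap_{\xi<\kappa}\Iwf_{f_\xi}$. For each $\xi$ I then use $\add(\Iwf_{f_\xi})=\cof(\Iwf_{f_\xi})=\kappa$ to select a $\subseteq$-increasing, $\subseteq$-continuous sequence $\langle A^{f_\xi}_\alpha:\alpha<\kappa\rangle$ cofinal in $\Iwf_{f_\xi}$ (take a cofinal family of size $\kappa$ and form its initial unions, which remain in the ideal since unions of $<\kappa$ members do). For $X\in\SNcal$ put $\psi_X(\xi)=\min\{\alpha:X\subseteq A^{f_\xi}_\alpha\}$, a well-defined element of $\kappa^\kappa$.

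For the upper bound $\cof(\SNcal)\le\dfrak_\kappa$, given $g\in\kappa^\kappa$ set $C_g:=\bigcap_{\xi<\kappa}A^{f_\xi}_{g(\xi)}$; then $C_g\subseteq A^{f_\xi}_{g(\xi)}\in\Iwf_{f_\xi}$ for each $\xi$, so $C_g\in\bigcap_\xi\Iwf_{f_\xi}=\SNcal$. If $F\subseteq\kappa^\kappa$ is dominating of size $\dfrak_\kappa$ and $X\in\SNcal$, pick $g\in F$ with $\psi_X\le g$; then $X\subseteq A^{f_\xi}_{\psi_X(\xi)}\subseteq A^{f_\xi}_{g(\xi)}$ for all $\xi$, whence $X\subseteq C_g$. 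Thus $\{C_g:g\in F\}$ is cofinal in $\SNcal$. Abstractly, $X\mapsto\psi_X$ and $g\mapsto C_g$ are a Tukey morphism witnessing $(\SNcal,\subseteq)\leqT(\kappa^\kappa,\le)$.

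The reverse reduction $(\kappa^\kappa,\le)\leqT(\SNcal,\subseteq)$ is the heart of the matter and the step I expect to be the main obstacle. Reusing $X\mapsto\psi_X$ as the second map, I need for each $g\in\kappa^\kappa$ a set $Y_g\in\SNcal$ such that $Y_g\subseteq X$ implies $g\le\psi_X$; unravelling the definition of $\psi_X$, it suffices that $Y_g\not\subseteq A^{f_\xi}_{\beta}$ for every $\xi$ and every $\beta<g(\xi)$, that is, that $Y_g$ has, for each $\xi$, a point outside $A^{f_\xi}_{<g(\xi)}:=\bigcup_{\beta<g(\xi)}A^{f_\xi}_\beta$ (a member of $\Iwf_{f_\xi}$, since $g(\xi)<\kappa=\add(\Iwf_{f_\xi})$). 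Choosing such escaping points is trivial pointwise, as each $A^{f_\xi}_{<g(\xi)}$ is a proper (indeed null) subset of $\R$; the difficulty is to select them so that the resulting set $Y_g$, of size $\le\kappa$, is itself strong measure zero. This cannot be taken for granted, because $\SNcal$ is only a $\sigma$-ideal and $\kappa$ may exceed $\non(\SNcal)$, so a union of $\kappa$ singletons need not lie in $\SNcal$.

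I would overcome this by a diagonalization along the scale: fixing for each $\xi$ a $\limsup$-cover $\sigma^\xi$ (with $|\sigma^\xi(n)|\ge f_\xi(n)$ eventually) that witnesses $A^{f_\xi}_{<g(\xi)}\in\Iwf_{f_\xi}$, I choose $y_\xi\notin[\sigma^\xi]_\infty$ while simultaneously arranging that, for each $\eta$, the tail $\{y_\xi:\xi\ge\eta\}$ is captured by a single cover of speed $f_\eta$, so that $Y_g\in\Iwf_{f_\eta}$ for every $\eta$ and hence $Y_g\in\SNcal$. Granting the existence of these test sets, the two Tukey reductions combine to $(\SNcal,\subseteq)\eqT(\kappa^\kappa,\le)$, and therefore $\cof(\SNcal)=\dfrak_\kappa$; the interval-cover bookkeeping in the construction of the $Y_g$, against the tension between escaping the fixed small sets and keeping the whole collection strong measure zero, is the point demanding the most care.
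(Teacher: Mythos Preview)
Your overall framework is the same as Yorioka's (and the paper's): fix a dominating family $\langle f_\xi:\xi<\kappa\rangle$, fix cofinal increasing chains $\langle A^{f_\xi}_\alpha:\alpha<\kappa\rangle$ in each $\Iwf_{f_\xi}$, and show $\SNcal\eqT\langle\kappa^\kappa,\le\rangle$ via the maps $X\mapsto\psi_X$ and $g\mapsto C_g$. The direction $\SNcal\leqT\langle\kappa^\kappa,\le\rangle$ is fine and matches the paper. The gap is in the reverse reduction, which you correctly flag as ``the point demanding the most care'' but do not actually carry out.

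The issue is that with your setup the construction of $Y_g$ cannot be completed. You fix the scale $\langle f_\xi\rangle$ \emph{first} and then pick the chains $\langle A^{f_\xi}_\alpha\rangle$ arbitrarily (just using $\add=\cof=\kappa$). With such arbitrary choices there is no reason for the ``escaping points'' $y_\xi\notin A^{f_\xi}_{<g(\xi)}$ to be selectable so that their union lies in $\SNcal$; your proposed ``diagonalization along the scale'' has no mechanism to keep the tails $\{y_\xi:\xi\ge\eta\}$ inside a single $\Iwf_{f_\eta}$ set. What is needed is precisely the additional property Yorioka builds into the matrix (item (iv) in the paper's description): for every $\alpha>0$ and every $B\in\Iwf_{f_\alpha}$, $\bigcap_{\gamma<\alpha}A^{f_\gamma}_0\smallsetminus B\neq\varnothing$. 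Granting this, one simply chooses $y_\xi\in\bigcap_{\eta<\xi}A^{f_\eta}_0\smallsetminus A^{f_\xi}_{<g(\xi)}$, and then the tail $\{y_\xi:\xi>\eta\}$ is automatically contained in $A^{f_\eta}_0\in\Iwf_{f_\eta}$, while the initial segment has size $<\kappa\le\non(\Iwf_{f_\eta})$.

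Securing property (iv) forces two changes to your scaffolding. First, the $A^{f_\xi}_\alpha$ must be chosen as dense $G_\delta$ sets (not arbitrary members of $\Iwf_{f_\xi}$), so that any countable intersection $\bigcap_{\eta<\xi}A^{f_\eta}_0$ is comeager and in particular contains a perfect set. Second, the dominating family $\langle f_\xi\rangle$ must be built \emph{recursively together with} the matrix, not fixed in advance: at stage $\xi$ one takes a perfect $P\subseteq\bigcap_{\eta<\xi}A^{f_\eta}_0$, invokes Lemma~\ref{factCohen} to find $g$ with $P\notin\Iwf_g$, and then picks $f_\xi$ above both $g$ and the $\xi$-th member of a fixed dominating family. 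This is exactly the content of the paper's notion of a $\lambda$-dominating system (Definition~\ref{defsuitmatrix}) and Lemma~\ref{newversion}. Without this recursive interleaving, the reverse Tukey map cannot be defined.
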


To prove Theorem \ref{yoriokachac}, Yorioka constructed a dominating family $\la f_\alpha\ |\ \alpha<\kappa\ra$ along with a \emph{matrix $\la A_\alpha^\beta:\alpha, \beta<\kappa\ra$} of subsets of the Cantor space $2^\omega$ fulling the following properties: 
\begin{itemize}
    \item[(i)] $\forall{\alpha, \beta<\kappa}(A_\alpha^\beta\subseteq 2^\omega\textrm{\ is a dense }G_\delta\textrm{\ set\ }\textrm{and\ } \,A_\alpha^\beta\in\Iwf_{f_\alpha})$;
    \item[(ii)] $\forall{\alpha, \beta, \beta'<\kappa}(\beta\leq\beta'\to A_\alpha^\beta\subseteq A_\alpha^{\beta'})$;
    \item[(iii)] $\forall{\alpha<\kappa}\forall{A\in\Iwf_{f_\alpha}}\exists{\beta<\kappa}(A\subseteq A_\alpha^\beta)$; and 
    \item[(iv)] $\forall{\alpha<\kappa}\forall{B\in\Iwf_{f_\alpha}}(\alpha>0\to\bigcap_{\gamma<\alpha}A_\gamma^0\smallsetminus B\neq \varnothing)$.
    \end{itemize}
 This gives a Tukey isomorphism between $\SNcal$ and $\la\kappa^\kappa,\leq\ra$ (where $\leq$ is interpreted as pointwise). 
 
 In this paper, the author introduces the notion of \textit{dominating system} (see Definition \ref{defsuitmatrix}), which improves this construction and refines Theorem \ref{yoriokachac} by providing bounds to $\cof(\SNcal)$ without the hypothesis $\add(\Iwf_f)=\cof(\Iwf_f)$ for all $f$. Concretely the author proves the following main result.
 
\begin{teorema}[Theorems \ref{lemfamily} and \ref{lemfamilytwo}]\label{mainth1}
If there is some $\lambda$-dominating system on a directed set $\la S,\leq_S\ra$ then  
\begin{itemize}
    \item[(i)] $\SNcal\leqT \la S^\lambda,\leq\ra$.
    \item[(ii)] If $\minnon\geq\lambda$ and $\la S,\leq_S\ra=\la\kappa\times\lambda,\leq\ra$ with $\kappa\leq\lambda$, then $\la\lambda^\lambda,\leq\ra\leqT\SNcal$.
\end{itemize}
\end{teorema}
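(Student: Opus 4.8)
The plan is to read both parts as explicit Tukey connections assembled from the data packaged in a $\lambda$-dominating system, abstracting Yorioka's construction. I will use that such a system supplies a dominating family $\la f_\xi : \xi<\lambda\ra$ of increasing functions in $\omega^\omega$ together with a matrix $\la A_\xi^s : \xi<\lambda,\ s\in S\ra$ whose clauses are the analogues of (i)--(iv) in the excerpt: each $A_\xi^s$ is a dense $G_\delta$ set in $\Iwf_{f_\xi}$, the matrix is monotone in the superscript ($s\leq_S s'\Rightarrow A_\xi^s\subseteq A_\xi^{s'}$), cofinal in each $\Iwf_{f_\xi}$, and satisfies the largeness clause that $\bigcap_{\zeta<\xi}A_\zeta^{0}$ cannot be covered by members of $\Iwf_{f_\xi}$.

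For part (i) I would exhibit a monotone cofinal map $\psi\colon\la S^\lambda,\leq\ra\to\SNcal$, which is exactly what $\SNcal\leqT\la S^\lambda,\leq\ra$ requires. Set $\psi(g)=\bigcap_{\xi<\lambda}A_\xi^{g(\xi)}$. Monotonicity is immediate from monotonicity of the matrix, since $g\leq g'$ forces $A_\xi^{g(\xi)}\subseteq A_\xi^{g'(\xi)}$ coordinatewise, so the intersections nest. For cofinality, given $X\in\SNcal$ I use $X\in\Iwf_{f_\xi}$ for every $\xi$ (as $\SNcal\subseteq\Iwf_{f_\xi}$) together with cofinality of the matrix to choose $s_\xi\in S$ with $X\subseteq A_\xi^{s_\xi}$; then $g(\xi):=s_\xi$ gives $X\subseteq\psi(g)$. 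The one point needing care is that $\psi(g)\in\SNcal$: each $A_\xi^{g(\xi)}\in\Iwf_{f_\xi}$, so the intersection lies in $\Iwf_{f_\xi}$ for every $\xi$, and since $\la f_\xi\ra$ is dominating and $f\leq^* f_\xi$ implies $\Iwf_{f_\xi}\subseteq\Iwf_f$, we conclude $\psi(g)\in\bigcap_\xi\Iwf_{f_\xi}=\SNcal$.

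For part (ii) I would build a full Tukey connection $(\phi,\psi)$ witnessing $\la\lambda^\lambda,\leq\ra\leqT\SNcal$, i.e.\ maps $\phi\colon\lambda^\lambda\to\SNcal$ and $\psi\colon\SNcal\to\lambda^\lambda$ with $\phi(h)\subseteq Y\Rightarrow h\leq\psi(Y)$. Writing $S=\kappa\times\lambda$ with the product order, let $\psi(Y)(\xi)$ be the least $\eta<\lambda$ such that $Y\subseteq A_\xi^{(\gamma,\eta)}$ for some $\gamma<\kappa$; this is well defined (because $Y\in\Iwf_{f_\xi}$ and the matrix is cofinal) and monotone in $Y$. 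The map $\phi$ comes from a recursion of length $\lambda$: for each $\xi$ I select a point $p_\xi\in\bigcap_{\zeta<\xi}A_\zeta^{0}$ lying outside $\bigcup_{\gamma<\kappa,\,\eta<h(\xi)}A_\xi^{(\gamma,\eta)}$, and put $\phi(h)=\{p_\xi:\xi<\lambda\}$. Then $\phi(h)\in\SNcal$: for each fixed $\zeta$, all $p_\xi$ with $\xi>\zeta$ lie in $A_\zeta^{0}\in\Iwf_{f_\zeta}$, while the remaining at most $|\zeta|+1<\lambda$ points are absorbed using $\minnon\geq\lambda$ (every set of size $<\lambda$ belongs to $\Iwf_{f_\zeta}$), so $\phi(h)\in\Iwf_{f_\zeta}$ for all $\zeta$. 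Finally, if $\phi(h)\subseteq Y$ and $Y\subseteq A_\xi^{(\gamma,\eta)}$, then $p_\xi\in A_\xi^{(\gamma,\eta)}$, which by the choice of $p_\xi$ forces $\eta\geq h(\xi)$; hence $\psi(Y)(\xi)\geq h(\xi)$ for all $\xi$, i.e.\ $h\leq\psi(Y)$.

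The main obstacle is the selection step in the recursion defining $\phi$: I must guarantee that a suitable $p_\xi$ exists at every stage, i.e.\ that $\bigcap_{\zeta<\xi}A_\zeta^{0}\not\subseteq\bigcup_{\gamma<\kappa,\,\eta<h(\xi)}A_\xi^{(\gamma,\eta)}$. This is where $\kappa\leq\lambda$ and the precise largeness clause of a $\lambda$-dominating system (Definition \ref{defsuitmatrix}) must be matched carefully, since the obstruction is a union of up to $\kappa\cdot|h(\xi)|\leq\lambda$ members of $\Iwf_{f_\xi}$ and thus is \emph{not} in general absorbable as a single element of the $\sigma$-ideal. The substantive work is to verify that the definition's largeness clause is exactly strong enough to escape such $\Iwf_{f_\xi}$-unions at every stage $\xi<\lambda$; once this is in place, $\minnon\geq\lambda$ handles the SMZ-membership of $\phi(h)$ and the Tukey inequality follows formally as above.
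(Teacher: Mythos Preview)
Your argument for (i) is correct and is exactly the paper's proof of Theorem~\ref{lemfamily}.

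For (ii), however, the obstacle you isolate is fatal to your approach as written, and the largeness clause of Definition~\ref{defsuitmatrix} is \emph{not} strong enough to resolve it. That clause only asserts $\bigcap_{\zeta<\xi}A^{f_\zeta}_{i_0}\notin\Iwf_{f_\xi}$, i.e.\ the intersection escapes every \emph{single} member of $\Iwf_{f_\xi}$. Your selection step asks for a point outside $\bigcup_{\gamma<\kappa}A_\xi^{(\gamma,h(\xi))}$ (after collapsing the $\eta$-coordinate by monotonicity), and this is a union of $\kappa$ many sets from a $\sigma$-ideal; nothing in the hypotheses bounds $\kappa$ by $\add(\Iwf_{f_\xi})$, so the union need not lie in $\Iwf_{f_\xi}$ and the clause gives you nothing. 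The gap is genuine, not a matter of careful matching.

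The paper's fix (Lemmas~\ref{one} and~\ref{two}) is to abandon the single point $p_\xi$ and instead choose, at stage $\xi$, a \emph{family} $\la x_\alpha^\xi:\alpha<\kappa\ra$ of points, where each $x_\alpha^\xi$ is only required to avoid the \emph{one} set $A_\xi^{(\alpha,h(\xi))}$ together with the ${<}\lambda$ points chosen at earlier stages. Since $\kappa<\lambda\leq\minnon$, that small set of earlier points is in $\Iwf_{f_\xi}$, so the obstacle at each $\alpha$ is a union of two elements of $\Iwf_{f_\xi}$ and the largeness clause applies directly. One then takes $\Psi_1(h):=\bigcap_{\xi<\lambda}A_{G(\xi)}^\xi$, where $G(\xi)\in\kappa\times\lambda$ is chosen (via cofinality of the directed system) large enough to cover $\{x_\alpha^{\xi'}:\xi'\leq\xi,\ \alpha<\kappa\}$ and with $G(\xi)_1\geq h(\xi)$. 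The point $x_\alpha^\xi$ then witnesses $\Psi_1(h)\not\subseteq A_\xi^{(\alpha,\delta)}$ for every $\delta\leq h(\xi)$, with a \emph{different} witness for each $\alpha$; this is precisely what your single $p_\xi$ was trying (and failing) to do simultaneously for all $\alpha<\kappa$.
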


The author with Mej\'ia and Rivera-Madrid \cite[Section 5]{CMR} asks the following questions: Is it consistent with ZFC that 
\begin{enumerate}
    \item[(Q1)]  $\add(\SNcal)<\min\{\cov(\SNcal),\non(\SNcal)\}$?
    \item[(Q2)]  $\add(\SNcal)<\non(\SNcal)<\cov(\SNcal)<\cof(\SNcal)$?   
    \item[(Q3)]  $\add(\SNcal)<\cov(\SNcal)<\non(\SNcal)<\cof(\SNcal)$?  
\end{enumerate} 

Question (Q2) was answered partially by the author with Mej\'ia and Rivera-Madrid \cite{CMR}. Concretely, they showed that, in Sack's model,
\[\add(\SNcal)=\non(\SNcal)=\aleph_1<\cov(\SNcal)=\aleph_2=\cfrak<\cof(\SNcal).\]
This is the first result where more than two cardinal invariants associated with $\SNcal$ are pairwise different. 

In this work, we partially answer question (Q3). More concretely, we prove the following.

\begin{teorema}[Theorem \ref{maintheorem}]\label{introtheorem}\label{mainth2}
   Let $\kappa\leq\lambda$ be regular uncountable cardinals where $\kappa^{<\kappa}=\kappa$ and let $\lambda_1,\lambda_2$ be cardinals such that $\lambda^{<\lambda}=\lambda$, $\lambda\leq\lambda_1$, $\lambda_2^{\lambda}=\lambda_2$ and $\lambda_1^{\aleph_0}=\lambda_1$. Then there is a cofinality preserving poset that forces 
\[\add(\SNcal)=\cov(\SNcal)=\kappa\leq\non(\SNcal)=\lambda\leq\cof(\SNcal)=\lambda_2
   \textrm{\ and\ }\cfrak=\lambda_1\]

\end{teorema}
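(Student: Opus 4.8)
The plan is to build the forcing extension via a matrix iteration of ccc posets, following the general template for separating cardinal invariants along two coordinates (one ``horizontal'' of length $\lambda_1$ and one ``vertical'' of height $\lambda$), and then to read off the values of the $\SNcal$-invariants using Theorem~\ref{mainth1}. First I would fix the ground model to satisfy GCH (or at least enough cardinal arithmetic so that the hypotheses $\kappa^{<\kappa}=\kappa$, $\lambda^{<\lambda}=\lambda$, $\lambda_2^{\lambda}=\lambda_2$, $\lambda_1^{\aleph_0}=\lambda_1$ can be arranged in intermediate extensions). The backbone is a two-dimensional system $\langle \Por_{\xi,\eta} : \xi\le\lambda,\ \eta\le\lambda_1\rangle$ of finite-support iterations, where at successor stages I interleave Suslin ccc forcings chosen by a bookkeeping function so as to control the relevant invariants. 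To force $\cov(\SNcal)=\cov(\Nwf)$ down to $\kappa$ while keeping $\non(\SNcal)$ large, the key is to add $\kappa$-cofinally many ``generic'' strong measure zero sets covering the reals on the vertical axis of height $\kappa$, while preserving a witnessing set of size $\lambda$ that is not strong measure zero; the $\cov(\Nwf)\le\cov(\SNcal)$ bound (S2) and Miller's bounds (S3) pin the small side, and the matrix structure guarantees the large uniformity side survives.

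The second main ingredient is the computation of $\add(\SNcal)$ and $\cof(\SNcal)$ via dominating systems. By Theorem~\ref{mainth1}(i), once I exhibit a $\lambda$-dominating system on a directed set $S$ of the appropriate size, I obtain $\SNcal\leqT\langle S^\lambda,\le\rangle$, which yields the upper bound $\cof(\SNcal)\le\cof(\langle S^\lambda,\le\rangle)$; choosing $|S|$ and the cofinality structure so that this evaluates to $\lambda_2$ gives $\cof(\SNcal)\le\lambda_2$, and the reverse inequality comes from Theorem~\ref{mainth1}(ii) with $\langle S,\le_S\rangle=\langle\kappa\times\lambda,\le\rangle$ under the side condition $\minnon\ge\lambda$, which the construction must arrange. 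The same Tukey equivalence delivers $\add(\SNcal)=\add(\langle S^\lambda,\le\rangle)$ on the bottom; combined with Carlson's bound (S1) $\add(\Nwf)\le\add(\SNcal)$ and the matrix iteration keeping $\add(\Nwf)$ (hence $\add(\SNcal)$) equal to $\kappa$, this pins $\add(\SNcal)=\kappa$. Thus $\add(\SNcal)=\cov(\SNcal)=\kappa$, $\non(\SNcal)=\lambda$, and $\cof(\SNcal)=\lambda_2$, with $\cfrak=\lambda_1$ coming from the length of the horizontal iteration.

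The delicate steps are the simultaneous control of $\cov(\SNcal)$ and $\non(\SNcal)$ and the verification that the $\lambda$-dominating system survives into the final model. For $\cov(\SNcal)=\kappa<\non(\SNcal)=\lambda$ I expect to need a preservation theorem along the lines used for separating the covering and uniformity of an ideal: one isolates a combinatorial relation whose $\non$-side is preserved by the entire iteration (so that a pre-selected $\lambda$-sized unbounded/non-null set stays a witness for $\non(\SNcal)\ge\lambda$), while the $\cov$-side is driven down to $\kappa$ by cofinally added generics. The main obstacle, and the heart of the argument, is establishing goodness/preservation of the matrix iteration for the $\SNcal$-specific relation: $\SNcal$ is not a definable Suslin ccc ideal in the usual sense, so I cannot directly quote standard preservation machinery, and I would instead route everything through the Yorioka ideals $\Iwf_f$ and the dominating system, proving that the system constructed at some intermediate stage remains $\lambda$-dominating (properties (i)--(iv) in the sense of Theorem~\ref{mainth1}) after the remaining forcing. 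This requires checking that the finite-support ccc tail adds no new ``small'' sets escaping the system and no new reals defeating the $\minnon\ge\lambda$ hypothesis, which is where the bulk of the technical work lies.
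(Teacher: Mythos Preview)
Your outline misses several structural points and misidentifies where the real work lies.

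First, the matrix is \emph{not} of horizontal length $\lambda_1$. In the paper the matrix has height $\lambda$ and length $\lambda\kappa$; the value $\cfrak=\lambda_1$ is obtained by a preliminary product of $\lambda_1$ Cohen reals \emph{before} the matrix is built. More importantly, the value $\cof(\SNcal)=\lambda_2$ cannot be reached by ccc forcing alone: Theorem~\ref{mainth1} reduces $\cof(\SNcal)$ to $\dfrak_{\kappa\times\lambda}^\lambda=\max\{\dfrak_\kappa^\lambda,\dfrak_\lambda\}$, and these are invariants of $\kappa^\lambda$ and $\lambda^\lambda$ that a ccc iteration over $\omega$ does not move. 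The paper therefore begins with two non-ccc preliminary steps, $\Fn_{<\lambda}(\lambda_2\times\lambda,\lambda)$ and $\Fn_{<\kappa}(\lambda_2\times\lambda,\kappa)$, to force $\dfrak_\lambda=\dfrak_\kappa^\lambda=\lambda_2$; only then does the ccc matrix start. Your plan has no mechanism to make $\dfrak_\lambda$ equal to $\lambda_2$, so the upper and lower bounds from Theorem~\ref{mainth1} would not meet.

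Second, you locate the ``heart of the argument'' in a preservation theorem for a $\SNcal$-specific relation and in showing that a $\lambda$-dominating system \emph{survives} the tail of the iteration. The paper does neither. The $\lambda$-dominating system on $\kappa\times\lambda$ is \emph{constructed directly in the final model}: the iterands are restricted localization forcings $\Loc^{V_{\Delta(\varepsilon),\varepsilon}}$, each adding a generic slalom $\varphi^\varepsilon$ and a dominating real $d^\varepsilon$, and one sets $A^f_{\rho,\xi}:=\Psi^f(d^f_{\rho,\xi},\varphi^f_{\rho,\xi})$ using the definable map of Lemma~\ref{lemconst}. Lemma~\ref{lemconstmatrix} gives monotonicity and cofinality in $\Iwf_f$, and Lemma~\ref{newversion} (which only needs $\cov(\Mwf)=\dfrak=\lambda$) upgrades this to a $\lambda$-dominating system. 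No preservation-through-the-tail argument is required.

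Third, both $\non(\SNcal)=\lambda$ and $\cov(\SNcal)\leq\kappa$ come for free from classical facts, not from bespoke $\SNcal$-preservation or ``generic SMZ covering'' forcings. Once the matrix forces $\cov(\Mwf)=\cof(\Nwf)=\lambda$, Miller's inequality (S3) pins $\non(\SNcal)=\lambda$ immediately. And since the top row is a finite-support iteration whose length $\lambda\kappa$ has cofinality $\kappa$, the standard fact (e.g.\ \cite[Lemma~8.2.6]{BJ}) gives $\cov(\SNcal)\leq\kappa$; together with $\add(\Nwf)=\kappa$ and (S1) this yields $\add(\SNcal)=\cov(\SNcal)=\kappa$. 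The genuine technical content is therefore not preservation but (a) the preliminary generalized-Cohen steps fixing $\dfrak_{\kappa\times\lambda}^\lambda=\lambda_2$, and (b) the explicit construction of the directed systems from the localization generics.
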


\begin{figure}
  \begin{center}
    \includegraphics[scale=0.9]{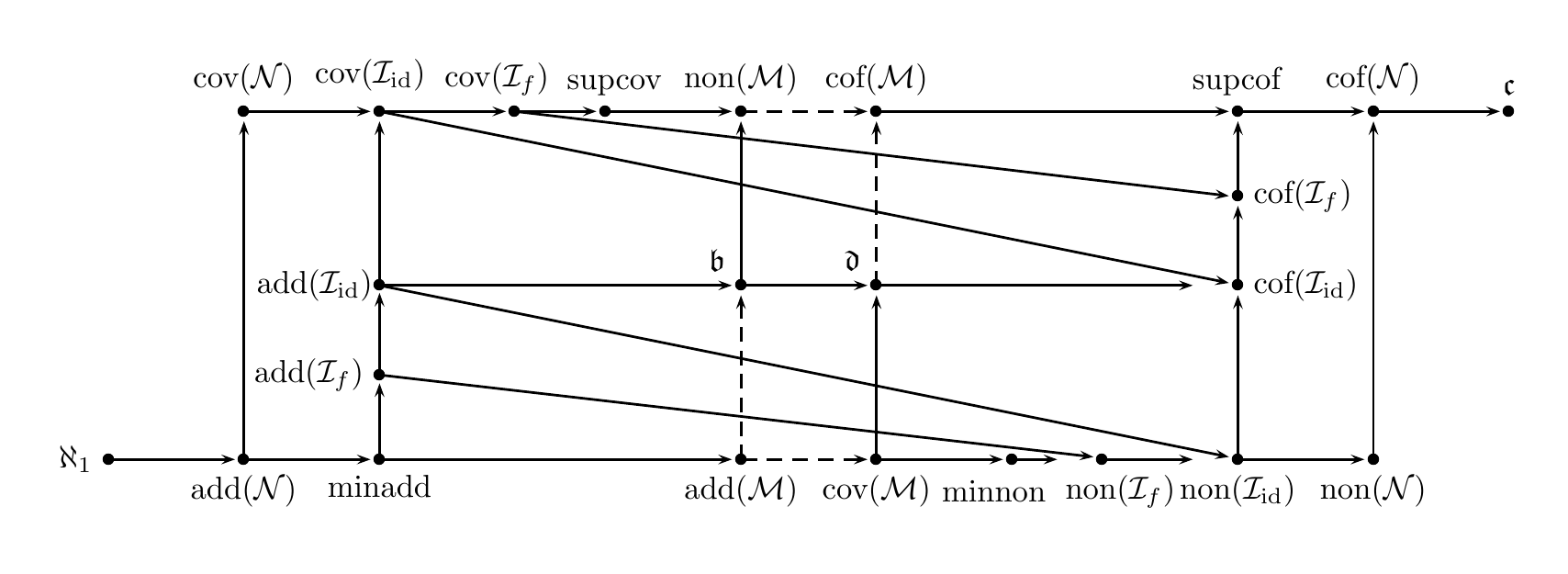}
    \caption{Extended Cicho\'n's diagram.}
    \label{Extended}
  \end{center}
\end{figure}

This is the second result where more than two cardinal invariants associated with $\SNcal$ are pairwise different. 

Now, to prove Theorem \ref{introtheorem}, we use the method of matrix forcing iterations. To achieve this, we go through the following steps:
\begin{itemize}
    \item[(P1)] We will force $\cfrak=\lambda_1$ and $\dfrak_{\kappa\times\lambda}^{\lambda}=\dfrak_{\lambda}^{}=\lambda_2$ by generalized Cohen forcing. These cardinals are introduced in Section \ref{SecPre}.
    \item[(P2)] Afterwards, we construct the matrix. Along the matrix, we will construct a dominating family $\la f_\gamma\ |\ \gamma<\lambda\ra$ along with a $\lambda$-dominating system on $\la\kappa\times\lambda,\leq\ra$. Thanks to Theorem \ref{mainth1}, the matrix forces  $\cof(\SNcal)=\lambda_2$. For the construction, we use restricted localization forcing.
    \item[(P3)] The constructed matrix forces $\cov(\Mwf)=\cof(\Nwf)=\lambda$ and $\add(\Nwf)=\non(\Mwf)=\kappa$, so $\kappa\leq\add(\SNcal)$ and $\non(\SNcal)=\lambda$ by (S1) and (S3). Since the matrix is obtained by a FS iteration of length with cofinality $\kappa$, $\cov(\SNcal)\leq\kappa$.

\end{itemize}

This paper is structured as follows. We review in Section 2 the basic notation and the results this paper is based on. The notions of $\Iwf_f$ directed system and $\lambda$-dominating system are introduced in Section 3, as well as the proof of Theorem \ref{mainth1}. In Section 4 we prove Theorem \ref{mainth2}. Finally, in Section 5 we present some open questions.

\section{Preliminaries}\label{SecPre} We start with the following basic notions. Let $\kappa$ be an infinite cardinal. Denote by $\Fn_{<\kappa}(I,J)$ the poset of partial functions from $I$ into $J$ with domain of size $<\kappa$, ordered by $\supseteq$. If $z$ is an ordered pair, $z_0$ and $z_1$ denotes the first and second component of $z$ respectively. Set $\omega^{\uparrow\omega}:=\{d\in\omega^{\omega}:d(0)=0\textrm{\ and }d\textrm{\ is increasing}\}$. For any set $A$, $\id_{A}$ denotes the \textit{identity function on $A$}. For each $\sigma \in (2^{<\omega})^{\omega}$ define $\hgt_{\sigma}\in\omega^{\omega}$ by $\hgt_{\sigma}(i):=|\sigma(i)|$ (see Figure \ref{diagram}). 

\begin{figure}
\begin{center}\vspace{1cm}
\includegraphics[width=1\linewidth]{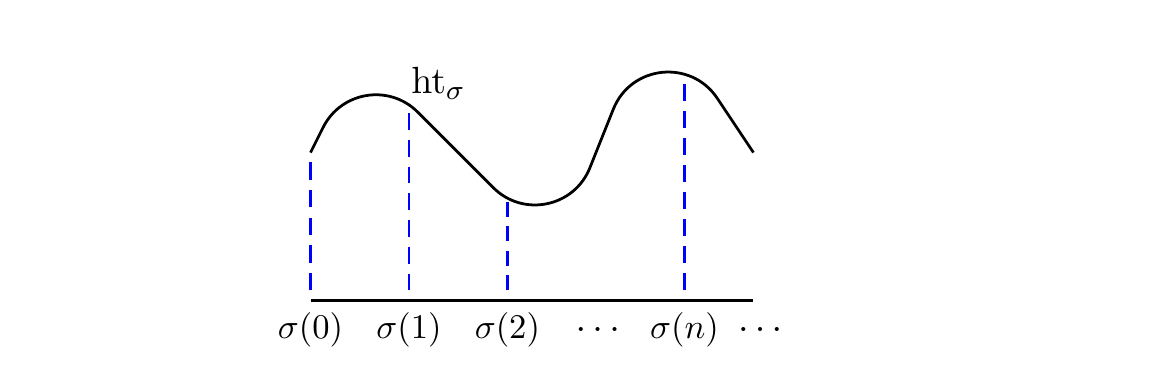}
\caption{Functions $\sigma$ and $\hgt_\sigma$.}
\label{diagram}
\end{center}
\end{figure}

Typically, cardinal invariants of the continuum are defined through relational systems
as follows. A \textit{relational system} is a triplet $\Rbf=\la X, Y,R\ra$ where $R$ is a relation contained in $X\times Y$. For $x\in X$ and $y\in Y$, $x R y$ is often read $y$ $R$-\textit{dominates} $x$.
 A family $E\subseteq X$ is \textit{$\Rbf$-bounded} if $\exists y\in Y\forall x\in E(x R y)$. Dually, $D\subseteq Y$ is \textit{$\Rbf$-dominating} if $\forall x\in X\exists y\in D(x R y)$. Such a relational system has two cardinal invariants associated with it: 

\vspace{-0.55cm}

\begin{align*}
\bfrak(\Rbf)&:=\min\{|E|:E\subseteq X  \textrm{\ is\ }  \Rbf\textrm{-unbounded}\},\\
\dfrak(\Rbf)&:=\min\{|D|:D\subseteq Y \textrm{\ is\ } \Rbf\textrm{-dominating}\}.
\end{align*}

Let $\Rbf':=\la X',Y',R'\ra$ be another relational system. If there are maps $\Psi_1:X\to X'$ and $\Psi_2:Y'\to Y$ such that, for any $x\in X$ and $y'\in Y'$, if $\Psi_1(x) R'y'$ then $x R \Psi_2(y')$, we say that \emph{$\Rbf$ is Tukey below $\Rbf'$}, denoted by $\Rbf\leqT\Rbf'$. Say that \emph{$\Rbf$ and $\Rbf'$ are Tukey equivalent}, denoted by $\Rbf\eqT\Rbf'$, if $\Rbf\leqT\Rbf'$ and $\Rbf'\leqT\Rbf$. Note that $\Rbf\leqT\Rbf'$ implies $\bfrak(\Rbf')\leq\bfrak(\Rbf)$ and $\dfrak(\Rbf)\leq\dfrak(\Rbf')$.

Let $\Rbf:=\la X,Y,R\ra$ and $\Rbf':=\la X',Y',R'\ra$  be two relational systems. Set $\Rbf\otimes\Rbf':=\la X\times X',Y\times Y',R_{\otimes}\ra$, where $(x,x')R_{\otimes}(y,y')$ iff $x R x'$ and $y R' y'$.

\begin{fact}[{\cite[Thm 4.11]{blass}}]
$\max\{\dfrak(\Rbf),\dfrak(\Rbf')\}\leq\dfrak(\Rbf\otimes\Rbf')\leq\dfrak(\Rbf)\cdot\dfrak(\Rbf')$ and $\bfrak(\Rbf\otimes\Rbf')=\min\{\bfrak(\Rbf),\bfrak(\Rbf')\}$
\end{fact}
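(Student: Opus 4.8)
The plan is to verify the four one-sided inequalities separately, since the two equalities and the two-sided bound on $\dfrak$ decompose into them. The cheapest part is the pair of bounds $\max\{\dfrak(\Rbf),\dfrak(\Rbf')\}\leq\dfrak(\Rbf\otimes\Rbf')$ and $\bfrak(\Rbf\otimes\Rbf')\leq\min\{\bfrak(\Rbf),\bfrak(\Rbf')\}$: both follow at once from the Tukey reductions $\Rbf\leqT\Rbf\otimes\Rbf'$ and $\Rbf'\leqT\Rbf\otimes\Rbf'$, together with the monotonicity of $\bfrak$ and $\dfrak$ under $\leqT$ already recorded in the excerpt. So first I would establish these two reductions.

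To get $\Rbf\leqT\Rbf\otimes\Rbf'$, fix a point $x_0'\in X'$ and set $\Psi_1\colon X\to X\times X'$, $\Psi_1(x)=(x,x_0')$, and $\Psi_2\colon Y\times Y'\to Y$, $\Psi_2(y,y')=y$. If $\Psi_1(x)\,R_\otimes\,(y,y')$, that is $x\,R\,y$ and $x_0'\,R'\,y'$, then in particular $x\,R\,\Psi_2(y,y')$, which is exactly the reduction condition. The reduction $\Rbf'\leqT\Rbf\otimes\Rbf'$ is symmetric, using a fixed $x_0\in X$ and projection to the second coordinate. (Here I read $R_\otimes$ as the coordinatewise relation: $(x,x')\,R_\otimes\,(y,y')$ iff $x\,R\,y$ and $x'\,R'\,y'$.)

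For the upper bound $\dfrak(\Rbf\otimes\Rbf')\leq\dfrak(\Rbf)\cdot\dfrak(\Rbf')$ I would simply multiply witnesses: if $D\subseteq Y$ is $\Rbf$-dominating and $D'\subseteq Y'$ is $\Rbf'$-dominating, then $D\times D'$ is $\Rbf\otimes\Rbf'$-dominating, since any $(x,x')$ is dominated coordinatewise by some $(y,y')\in D\times D'$; taking $D,D'$ of minimal size gives the bound. The only genuinely non-formal step is the remaining direction of the $\bfrak$-equality, namely $\min\{\bfrak(\Rbf),\bfrak(\Rbf')\}\leq\bfrak(\Rbf\otimes\Rbf')$. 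Here I would take an $\Rbf\otimes\Rbf'$-unbounded set $E\subseteq X\times X'$ of size $\bfrak(\Rbf\otimes\Rbf')$ and look at its two coordinate projections $E_0\subseteq X$ and $E_1\subseteq X'$. I claim at least one of them is unbounded in its factor: otherwise $E_0$ has a bound $y\in Y$ and $E_1$ has a bound $y'\in Y'$, and then $(y,y')$ would dominate every element of $E$ coordinatewise, contradicting unboundedness of $E$. Whichever projection is unbounded has cardinality at most $|E|$, so one of $\bfrak(\Rbf),\bfrak(\Rbf')$ is $\leq\bfrak(\Rbf\otimes\Rbf')$, giving the claim. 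Combined with the earlier inequality this yields the equality.

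The whole argument is elementary; the main point to get right is the projection step for the lower $\bfrak$-bound, where one must be careful that boundedness of \emph{both} projections is witnessed by a single pair $(y,y')$ which then bounds all of $E$. I would also note the minor nondegeneracy assumption that $X$ and $X'$ are nonempty, needed to choose the fixed points $x_0,x_0'$ in the Tukey reductions.
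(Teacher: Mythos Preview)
Your argument is correct and is the standard elementary verification. The paper does not supply its own proof of this statement: it is recorded as a Fact with a citation to \cite[Thm.~4.11]{blass}, so there is nothing in the paper to compare against. Your reading of $R_\otimes$ as the coordinatewise relation $(x,x')\,R_\otimes\,(y,y')\Leftrightarrow x\,R\,y\wedge x'\,R'\,y'$ is the intended one (the paper's displayed definition contains an evident typo), and your nondegeneracy remark about $X,X'\neq\varnothing$ is apt.
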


A \emph{directed set} is a set $S$ with a preorder $\leq_S$ such that every finite subset of $S$ has an upper bound. In other words, for any $x$ and $y$ in $S$ there exists a $z$ in S with $x\leq_S z$ and $y\leq_S z$.

Given a function $b$ with domain $\omega$ such that $b(i)\neq\emptyset$ for all $i<\omega$, and $h\in \omega^\omega$, define $\Scal(b,h)=\prod_{n<\omega}[b(n)]^{\leq h(n)}$. For $x\in\omega^\omega$ and $\psi\in\Scal(b,h)$, say that $x\in^{*}\varphi$ iff $\forall^{\infty}{n<\omega}(x(n)\in \varphi(n))$, which is read \textit{$\varphi$ localizes $x$}.

\begin{example}\label{exam} Let $\kappa$ and $\lambda$ be non-zero cardinals, and let $\la S,\leq_S \ra$ be a directed set.
\begin{enumerate}[(1)]
    \item Consider the relational system $\Ed:=\la\omega^\omega,\omega^\omega,\neq^*\ra$, where for $f,g\in\omega^\omega$, $f\neq^* g$ iff $\exists n<\omega\forall m\geq n(f(m)\neq g(m))$. By \cite[Thm. 2.4.1 \& Thm. 2.4.7]{BJ},  $\bfrak(\Ed)=\non(\Mwf)$ and $\dfrak(\Ed)=\cov(\Mwf)$.
    \item Define $\Lc(\omega,h):=\la \omega^\omega,\Scal(\omega,h),\in^*\ra$ (here, $\omega$ denotes the constant function with value $\omega$), which is a relational system. If $h\in\omega^\omega$ goes to infinity then $\add(\Nwf):=\bfrak(\Lc(\omega,h))$ and $\cof(\Nwf):=\dfrak(\Lc(\omega,h))$ (see \cite[Thm. 2.3.9]{BJ}).
    \item As a relational system, $S$ is $\la S, S,\leq_S\ra$, 
    $\bfrak(S):=\bfrak(\la S,S,\leq_S\ra)$ and $\dfrak(S):=\dfrak(\la S,S,\leq_S\ra)$. Note that, if $S$ has no maximum, then $\bfrak(S)$ is regular and $\bfrak(S)\leq\dfrak(S)$. Even more, $\cf(\dfrak(S))\geq\bfrak(S)$.
    \item Consider the relational system $\mathbf{D}_{S}^\lambda:=\la S^\lambda,S^\lambda,\leq \ra$ where $X\leq Y$ iff $\forall \alpha<\lambda(X(\alpha)\leq_S Y(\alpha))$. Define $\bfrak_{S}^\lambda:=\bfrak(\mathbf{D}_{S}^\lambda)$ and $\dfrak_{S}^\lambda:=\dfrak(\mathbf{D}_{S}^\lambda)$. 
    \item Denote $\bfrak_{\kappa\times\lambda}^\lambda:=\bfrak(\Dbf_{\kappa\times\lambda}^\lambda)$ and $\dfrak_{\kappa\times\lambda}^\lambda:=\dfrak(\Dbf_{\kappa\times\lambda}^\lambda)$ where $\lambda\times\kappa$ is ordered by $(\alpha,\beta)\leq(\alpha',\beta')$ iff $\alpha\leq\alpha'$ and $\beta\leq\beta'$.
    \item Assume that $\lambda$ is infinite. Consider the relational system $\Dbf_{S}^{\lambda}(\leq^*):=\la S^\lambda,S^\lambda,\leq^* \ra$ where $X\leq^* Y$ iff $\exists\alpha<\lambda\forall\beta\in[\alpha,\lambda)(X(\beta)\leq Y(\beta))$. Set $\bfrak_{S}^\lambda(\leq^*):=\bfrak(\mathbf{D}_{S}^\lambda(\leq^*))$ and $\dfrak_{S}^\lambda(\leq^*):=\dfrak(\mathbf{D}_{S}^\lambda(\leq^*))$.
    \item When $\kappa$ is infinite, define $\bfrak_\kappa:=\bfrak_\kappa^\kappa(\leq^*)$ and $\dfrak_\kappa:=\dfrak_\kappa^\kappa(\leq^*)$ (this is a particular case of $\Dbf_S^\lambda(\leq^*)$ with $S=\lambda=\kappa$). These are the well known \textit{unbounding number of}\,\,$\kappa^\kappa$ and  \textit{ dominating number of}\,\, $\kappa^\kappa$ respectively.
\end{enumerate}
\end{example}

The following result follows from (3).


\begin{corollary}
Let $S$ be a directed partial order and let $\lambda$ be a non-zero cardinal. If $S$ has no maximum then $\aleph_0\leq \cf(\bfrak_S^\lambda)=\bfrak_S^\lambda\leq \cf(\dfrak_S^\lambda)\leq \dfrak_S^\lambda\leq |S|^\lambda$.
\end{corollary}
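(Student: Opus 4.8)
The plan is to recognize that $\bfrak_S^\lambda$ and $\dfrak_S^\lambda$ are nothing but the invariants $\bfrak$ and $\dfrak$ of item~(3) applied to a single directed partial order, namely the product $\la S^\lambda,\leq\ra$ with the coordinatewise order. Indeed, $\Dbf_S^\lambda=\la S^\lambda,S^\lambda,\leq\ra$ is exactly the relational system assigned in~(3) to the poset $T:=\la S^\lambda,\leq\ra$, so $\bfrak_S^\lambda=\bfrak(T)$ and $\dfrak_S^\lambda=\dfrak(T)$. Hence the entire chain of inequalities will follow by applying~(3) to $T$, once I verify that $T$ is a directed partial order with no maximum, together with the trivial upper bound $\dfrak_S^\lambda\leq|S|^\lambda$.

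First I would check that $T$ is a directed partial order. Reflexivity, transitivity and antisymmetry hold coordinatewise because $\leq_S$ is a partial order on $S$. For directedness, given finitely many $X_0,\dots,X_{n-1}\in S^\lambda$, I use that $S$ is directed to choose, for each $\alpha<\lambda$, an upper bound $z(\alpha)\in S$ of $\{X_i(\alpha):i<n\}$; then $z\in S^\lambda$ satisfies $X_i\leq z$ for all $i<n$.

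Next I would verify that $T$ has no maximum, which is the only place the hypothesis $\lambda\neq 0$ is used. Fix any $X\in S^\lambda$ and any coordinate $\alpha_0<\lambda$. Since $S$ has no maximum, there is $s\in S$ with $s\not\leq_S X(\alpha_0)$. Letting $Y\in S^\lambda$ agree with $X$ off $\alpha_0$ and satisfy $Y(\alpha_0)=s$, we obtain $Y\not\leq X$, so $X$ is not a maximum of $T$.

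With $T$ a directed partial order without maximum, item~(3) applies directly: $\bfrak(T)$ is regular, so $\cf(\bfrak_S^\lambda)=\bfrak_S^\lambda$; moreover any finite subset of $T$ is bounded (by directedness), so every unbounded family is infinite and thus $\aleph_0\leq\bfrak_S^\lambda$. Again by~(3) we have $\bfrak(T)\leq\dfrak(T)$ and $\cf(\dfrak(T))\geq\bfrak(T)$, which translate into $\bfrak_S^\lambda\leq\cf(\dfrak_S^\lambda)\leq\dfrak_S^\lambda$. Finally, $S^\lambda$ is $\Dbf_S^\lambda$-dominating since $X\leq X$ for every $X$, whence $\dfrak_S^\lambda\leq|S^\lambda|=|S|^\lambda$. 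Concatenating these yields $\aleph_0\leq\cf(\bfrak_S^\lambda)=\bfrak_S^\lambda\leq\cf(\dfrak_S^\lambda)\leq\dfrak_S^\lambda\leq|S|^\lambda$. I expect no genuine obstacle here: the whole content is the identification of $\Dbf_S^\lambda$ with the product directed order and the routine transfer of directedness and of the ``no maximum'' property from $S$ to $S^\lambda$.
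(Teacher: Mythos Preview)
Your proof is correct and follows exactly the approach the paper intends: the corollary is stated right after Example~(3) with the remark that it ``follows from (3)'', and your argument makes this precise by showing that $T=\la S^\lambda,\leq\ra$ is a directed partial order without maximum and then invoking (3) to obtain the whole chain of inequalities. Your verification is in fact more detailed than what the paper supplies.
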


We prove some results about the cardinal invariants associated with $\Dbf_S^\lambda$ and $\Dbf_S^\lambda(\leq^*)$.

\begin{lemma}\label{basic}
Let $S$ be a directed partial order and let $\lambda$ be a non-zero cardinal. If $S$ has no maximum then   
\begin{itemize}
    \item[(i)] $\bfrak(S)=\bfrak_{S}^{\lambda}\leq \dfrak(S)\leq\dfrak_{S}^\lambda\leq\dfrak(S)^\lambda\leq|S|^\lambda$. Even more, $S\leqT\Dbf_S^\lambda$. 
    \item[(ii)] If $\lambda<\bfrak(S)$ then $\Dbf_S^\lambda\eqT S$.
    \item[(iii)] If $\lambda\leq\lambda'$ are non-zero cardinals, then $\mathbf{D}_{S}^\lambda\leqT\mathbf{D}_{S}^{\lambda^{\prime}}$. In particular, $\dfrak_{S}^{\lambda}\leq\dfrak_{S}^{\lambda^{\prime}}$.
    \item[(iv)] If $\lambda$ is infinite then $\dfrak_S^\lambda(\leq^*)\leq\dfrak_S^\lambda$ and $\bfrak(S)\leq\bfrak_S^\lambda(\leq^*)$. 
\end{itemize}
\end{lemma}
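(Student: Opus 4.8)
The plan is to treat the four items almost entirely through explicit Tukey maps, reserving a direct cardinal-arithmetic argument only for the single equality that cannot be obtained from a reduction. Throughout I use that $S$ is nonempty (being directed) and that, by the very definition of $\bfrak(S)$, every subfamily of $S$ of size $<\bfrak(S)$ is $\leq_S$-bounded.

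For (i) I would first establish $S\leqT\Dbf_S^\lambda$ via the maps $\Psi_1(x):=\la x : \alpha<\lambda\ra$ (the constant sequence) and $\Psi_2(Y):=Y(0)$: if the constant-$x$ sequence is pointwise $\leq Y$, then in particular $x\leq_S Y(0)$, which is exactly the Tukey condition (here $\lambda\neq 0$ ensures $Y(0)$ is defined). This reduction yields $\bfrak_S^\lambda\leq\bfrak(S)$ and $\dfrak(S)\leq\dfrak_S^\lambda$. The reverse inequality $\bfrak(S)\leq\bfrak_S^\lambda$ is the one point where no Tukey reduction is available, so I argue directly: given $E\subseteq S^\lambda$ with $|E|<\bfrak(S)$, each column $\{X(\alpha):X\in E\}$ has size $<\bfrak(S)$ and is therefore bounded by some $y_\alpha$, and the sequence $\la y_\alpha : \alpha<\lambda\ra$ pointwise-dominates all of $E$; hence $E$ is bounded. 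This gives $\bfrak(S)=\bfrak_S^\lambda$. The remaining inequalities of the chain close routinely: $\bfrak_S^\lambda=\bfrak(S)\leq\dfrak(S)$ by Example \ref{exam}(3); $\dfrak_S^\lambda\leq\dfrak(S)^\lambda$ because $D^\lambda$ is $\leq$-dominating whenever $D$ is $\leq_S$-dominating (dominate coordinatewise); and $\dfrak(S)^\lambda\leq|S|^\lambda$ from $\dfrak(S)\leq|S|$.

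For (ii) it suffices to add the converse reduction $\Dbf_S^\lambda\leqT S$, and this is exactly where the hypothesis $\lambda<\bfrak(S)$ is needed: the whole range $\{X(\alpha):\alpha<\lambda\}$ of a single $X\in S^\lambda$ now has size $\leq\lambda<\bfrak(S)$ and hence admits an upper bound, so I set $\Psi_1(X)$ to be such a bound and $\Psi_2(y):=\la y : \alpha<\lambda\ra$; if $\Psi_1(X)\leq_S y$ then $X(\alpha)\leq_S\Psi_1(X)\leq_S y$ for every $\alpha$, verifying the reduction. With (i) this gives $\Dbf_S^\lambda\eqT S$. For (iii), viewing $\lambda\subseteq\lambda'$, I pad: $\Psi_1(X)$ agrees with $X$ on $\lambda$ and equals a fixed $s_0\in S$ on $[\lambda,\lambda')$, while $\Psi_2(Y):=Y\restriction\lambda$; any pointwise domination of $\Psi_1(X)$ restricts to one of $X$, so $\Dbf_S^\lambda\leqT\Dbf_S^{\lambda'}$ and thus $\dfrak_S^\lambda\leq\dfrak_S^{\lambda'}$.

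Finally (iv) follows from the single observation that pointwise domination implies eventual domination, so the identity maps witness $\Dbf_S^\lambda(\leq^*)\leqT\Dbf_S^\lambda$; reading off the two cardinal consequences gives $\dfrak_S^\lambda(\leq^*)\leq\dfrak_S^\lambda$ and $\bfrak_S^\lambda\leq\bfrak_S^\lambda(\leq^*)$, and the latter combines with the equality $\bfrak(S)=\bfrak_S^\lambda$ from (i) to give $\bfrak(S)\leq\bfrak_S^\lambda(\leq^*)$. I expect the only genuinely non-formal step to be the column-wise bounding argument in (i): it is where the gap between ``$S$ directed'' and ``$S$ being $\lambda$-directed'' makes itself felt, and the failure of a single upper bound for $\lambda$-many coordinates is precisely what forces the extra hypothesis $\lambda<\bfrak(S)$ in (ii).
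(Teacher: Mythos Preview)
Your proposal is correct and follows essentially the same approach as the paper: the same Tukey maps (constant sequences and evaluation/restriction), the same column-wise bounding argument for $\bfrak(S)\leq\bfrak_S^\lambda$, and the same identity reduction for (iv). If anything, your (iii) is slightly cleaner in padding with a fixed $s_0\in S$ rather than with ``$0$'' as the paper does.
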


\begin{proof} \begin{itemize}
\item[(i)] Clearly $\dfrak_{S}^\lambda\leq\dfrak(S)^\lambda\leq |S|^\lambda$ and $\bfrak(S)\leq\dfrak(S)$. It remains to prove that $\bfrak(S)=\bfrak_{S}^{\lambda}$ and $\dfrak(S)\leq\dfrak_S^\lambda$. To see $\bfrak(S)\leq\bfrak_S^\lambda$, let $B\subseteq S^\lambda$ with $|B|<\bfrak(S)$. For $\zeta<\lambda$, define $\Gamma_\zeta:=\{f(\zeta)\,|\,f\in B\}$. Since $|\Gamma_\zeta|<\bfrak(S)$, choose $i_\zeta\in S$ such that $f(\zeta)\leq_{S}i_\zeta$ for all $f\in B$. Define $g\in S^\lambda$ by $g(\zeta):=i_\zeta$. Then  $g$ bounds $B$. 

For the converse inequality, it suffices to prove that $S\leqT\Dbf_S^\lambda$. For $f\in S^\lambda$ put $\Psi_2(f):=f(0)$. On the other hand, for $i\in S$ define $f_i\in S^\lambda$ by $f_i(\xi):=i$ for each $\xi<\lambda$, so put $\Psi_1(i):=f_i$. It is clear that if $f_i\leq f$ then $i\leq_S f(0)$. Hence, $\dfrak(S)\leq\dfrak_S^\lambda$ and $\bfrak_S^\lambda\leq\bfrak(S)$.


\item[(ii)] By (i), it is enough to show that $\Dbf_S^\lambda\leqT S$. To this end let $f\in S^\lambda$. Define $D:=\{f(\beta):\beta<\lambda\}$. Since $|D|<\lambda$, choose $i_f$ in $S$ such that $f(\beta)\leq_S i_f$ for each $\beta<\lambda$ and put $\Psi_1(f):=i_f$. 

Finally, put $\Psi_2(j):=f_j$ for $j\in S$ as in (i). It remains to check that, if $i_f\leq_S j$ then $f\leq f_j$. Fix $\beta<\lambda$. Then $f(\beta)\leq_S i_f\leq_S j=f_j(\beta)$. 

\item[(iii)] Define $\Psi_2:S^{\lambda'}\to S^\lambda$ as follows: For $f\in S^{\lambda'}$ set $\Psi_2(f):=f\upharpoonright\lambda$. 

To define $\Psi_1:S^\lambda\to S^{\lambda'}$,  for $g\in S^\lambda$ define $g^*\in S^{\lambda'}$ by setting, for any $\xi<\lambda'$, $g^*(\xi):=g(\xi)$ if $\xi<\lambda$, and $g^*(\xi)=0$ otherwise. Put $\Psi_1(g):=g^*$. It is clear that if $g^*\leq f$ then $g\leq f\upharpoonright\lambda$. 
\item[(iv)] Obvious because $\Dbf_S^\lambda(\leq^*)\leqT \Dbf_S^\lambda$. 
\qedhere
\end{itemize}
\end{proof}

\begin{lemma}\label{lemmd}
If $\lambda$ is an infinite cardinal and $S$ has no maximum, then $\dfrak_S^\lambda(\leq^*)>\lambda$. 
\end{lemma}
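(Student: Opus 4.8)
The plan is to show that no family $D\subseteq S^\lambda$ with $|D|\le\lambda$ can be $\Dbf_S^\lambda(\leq^*)$-dominating; since $\dfrak_S^\lambda(\leq^*)$ is by definition the least size of a dominating family, this gives $\dfrak_S^\lambda(\leq^*)>\lambda$. Concretely, I would fix an enumeration $D=\{Y_\xi\mid\xi<\lambda\}$ (allowing repetitions, so that the case $|D|<\lambda$ is covered as well) and construct a single witness $X\in S^\lambda$ that is not $\leq^*$-dominated by any $Y_\xi$. The point to keep in mind is that $X\not\leq^* Y_\xi$ means precisely that $\{\beta<\lambda\mid X(\beta)\not\leq_S Y_\xi(\beta)\}$ is unbounded in $\lambda$, so the goal is to build $X$ that escapes each $Y_\xi$ \emph{cofinally often}.

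The per-coordinate move is immediate from the hypothesis that $S$ has no maximum: for any $s\in S$ there is some $t\in S$ with $t\not\leq_S s$ (otherwise $s$ would be a maximum of $S$). Hence, once I have decided at coordinate $\beta$ which single index $\xi$ I wish to attack there, I may set $X(\beta)$ to be any $t$ with $t\not\leq_S Y_\xi(\beta)$. Observe that I only ever need to defeat \emph{one} value of $S$ at each coordinate, so directedness of $S$ plays no role here; the genuine work lies in organising which index is attacked at which coordinate so that every $\xi<\lambda$ ends up attacked on a cofinal set.

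This reduces the lemma to a purely combinatorial scheduling statement: there is a function $g\colon\lambda\to\lambda$ such that each fibre $g^{-1}(\{\xi\})$ is cofinal in $\lambda$. Granting such a $g$, I would define $X(\beta)$ to escape $Y_{g(\beta)}(\beta)$ as above; then for every $\xi$ the cofinal set $g^{-1}(\{\xi\})$ witnesses $X\not\leq^* Y_\xi$, and the argument is complete. The one point that requires care, and which I regard as the crux, is producing $g$ uniformly for both regular and singular $\lambda$: the naive ordinal diagonalisation available for $\dfrak_\kappa$ on $\kappa^\kappa$ (taking suprema of the earlier values) is not available in an abstract directed order without suprema, which is exactly why the scheduling approach is needed.

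The observation that unlocks the scheduling is that, because $\lambda$ is a cardinal, \emph{any} subset $A\subseteq\lambda$ with $|A|=\lambda$ is automatically cofinal in $\lambda$: were $A$ bounded by some $\alpha<\lambda$, then $|A|\le|\alpha|<\lambda$, a contradiction. I would therefore take a bijection $e\colon\lambda\to\lambda\times\lambda$ (available since $\lambda\cdot\lambda=\lambda$ for infinite $\lambda$) and let $g$ be the composite of $e$ with the first projection. Each fibre $g^{-1}(\{\xi\})=e^{-1}(\{\xi\}\times\lambda)$ then has cardinality $\lambda$ and is consequently cofinal, exactly as required. Assembling the per-coordinate escapes along this schedule produces the desired $X$ and hence establishes $\dfrak_S^\lambda(\leq^*)>\lambda$.
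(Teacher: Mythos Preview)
Your proof is correct and follows essentially the same route as the paper: both fix a bijection $\lambda\to\lambda\times\lambda$, use its first coordinate to decide which $Y_\xi$ to attack at each $\beta$, and observe that each index is then attacked on a set of size $\lambda$ (hence cofinal, since $\lambda$ is a cardinal). The only cosmetic difference is that the paper picks $X(\beta)$ strictly above $Y_{g(\beta)}(\beta)$ (using directedness together with ``no maximum''), whereas you pick $X(\beta)\not\leq_S Y_{g(\beta)}(\beta)$ directly from ``no maximum''; your version is slightly more economical since, as you note, directedness is not actually needed.
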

\begin{proof}
Let $F:=\{f_\xi\,|\,\xi<\lambda\}\subseteq S^\lambda$ be a family of size $\lambda$, and let $K$ be a bijection from $\lambda$ onto $\lambda\times\lambda$. Define $f\in S^\lambda$ as follows: for any $\beta<\lambda$ we can choose $s_\beta>f_{K(\beta)_0}(\beta)$ (such $s_\beta$ exists because $S$ has no maximum) and put $f(\beta):=s_\beta$. For each $\xi, \eta<\lambda$ set $\beta_{\xi,\eta}:=K^{-1}(\xi,\eta)$, so $K(\beta_{\xi,\eta})=\xi$ and $f(\beta_{\xi,\eta})>f_\xi(\beta_{\xi,\eta})$. Then $|\{\beta<\lambda\,|\,f(\beta)>f_\xi(\beta)\}|=\lambda$. 
\end{proof}

In the next theorem we give a characterization of $\dfrak_S^\lambda$. 

\begin{theorem}\label{thmd}
If $\lambda$ is an infinite cardinal and $S$ has no maximum, then \[\dfrak_S^\lambda=\dfrak_S^\lambda(\leq^*)\cdot\sup_{\kappa<\lambda}\{\dfrak_S^\kappa\}.\]
\end{theorem}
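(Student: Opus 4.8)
The plan is to prove the two inequalities $\mu\le\dfrak_S^\lambda$ and $\dfrak_S^\lambda\le\mu$ separately, where I abbreviate $\nu:=\sup_{\kappa<\lambda}\{\dfrak_S^\kappa\}$ and $\mu:=\dfrak_S^\lambda(\leq^*)\cdot\nu$. The first inequality is immediate from the earlier results: Lemma \ref{basic}(iv) gives $\dfrak_S^\lambda(\leq^*)\le\dfrak_S^\lambda$, and Lemma \ref{basic}(iii) gives $\dfrak_S^\kappa\le\dfrak_S^\lambda$ for every $\kappa<\lambda$, hence $\nu\le\dfrak_S^\lambda$. Since $\dfrak_S^\lambda(\leq^*)$ is infinite (indeed $>\lambda$ by Lemma \ref{lemmd}), the product $\mu$ equals $\max\{\dfrak_S^\lambda(\leq^*),\nu\}$, which is thus $\le\dfrak_S^\lambda$.

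For the nontrivial inequality $\dfrak_S^\lambda\le\mu$ I would construct a $\leq$-dominating family of size $\le\mu$. First fix a $\leq^*$-dominating family $\mathcal E\subseteq S^\lambda$ with $|\mathcal E|=\dfrak_S^\lambda(\leq^*)$, and for each ordinal $\alpha<\lambda$ fix a dominating family $\mathcal D_\alpha\subseteq S^\alpha$ for the system $\Dbf_S^\alpha=\la S^\alpha,S^\alpha,\leq\ra$. Since a bijection between $\alpha$ and $|\alpha|$ induces an isomorphism $\Dbf_S^\alpha\cong\Dbf_S^{|\alpha|}$ of relational systems, we may take $|\mathcal D_\alpha|=\dfrak_S^{|\alpha|}\le\nu$, using that $|\alpha|<\lambda$. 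Now, for each triple $(\alpha,g,h)$ with $\alpha<\lambda$, $g\in\mathcal E$ and $h\in\mathcal D_\alpha$, define $F_{\alpha,g,h}\in S^\lambda$ by $F_{\alpha,g,h}(\beta):=h(\beta)$ for $\beta<\alpha$ and $F_{\alpha,g,h}(\beta):=g(\beta)$ for $\beta\ge\alpha$, and let $\mathcal F$ be the collection of all such $F_{\alpha,g,h}$.

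To check that $\mathcal F$ is $\leq$-dominating, take any $x\in S^\lambda$. Choose $g\in\mathcal E$ with $x\leq^* g$ and let $\alpha<\lambda$ witness this, so $x(\beta)\leq_S g(\beta)$ for all $\beta\in[\alpha,\lambda)$. Then $x\upharpoonright\alpha\in S^\alpha$, so pick $h\in\mathcal D_\alpha$ dominating it, i.e.\ $x(\beta)\leq_S h(\beta)$ for all $\beta<\alpha$. By construction $x(\beta)\leq_S F_{\alpha,g,h}(\beta)$ for every $\beta<\lambda$, that is $x\le F_{\alpha,g,h}$. Note that directedness of $S$ enters only through the existence of the dominating families $\mathcal E$ and $\mathcal D_\alpha$, not to combine $g$ and $h$ pointwise, since the definition of $F_{\alpha,g,h}$ is piecewise.

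Finally I count: $|\mathcal F|\le\lambda\cdot\dfrak_S^\lambda(\leq^*)\cdot\nu$. The expected main obstacle is precisely this stray factor of $\lambda$, which a priori could exceed $\mu$; it is absorbed using Lemma \ref{lemmd}, which guarantees $\lambda<\dfrak_S^\lambda(\leq^*)$. Hence $\lambda\cdot\dfrak_S^\lambda(\leq^*)=\dfrak_S^\lambda(\leq^*)$ and $|\mathcal F|\le\dfrak_S^\lambda(\leq^*)\cdot\nu=\mu$, giving $\dfrak_S^\lambda\le|\mathcal F|\le\mu$. Combining the two inequalities yields the claimed equality.
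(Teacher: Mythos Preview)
Your proof is correct and follows essentially the same approach as the paper's: both directions are proved the same way, with the key construction being the piecewise functions $F_{\alpha,g,h}$ and the absorption of the factor $\lambda$ via Lemma~\ref{lemmd}. A minor difference is that the paper dominates $\beta\mapsto\max\{f(\beta),g(\beta)\}$ on $[0,\xi)$ (which tacitly uses directedness to pick upper bounds), whereas you simply dominate $x\upharpoonright\alpha$; your version is slightly cleaner but the argument is the same.
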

\begin{proof}
 Clearly $\dfrak_S^\lambda(\leq^*)\cdot\sup_{\xi<\lambda}\{\dfrak_S^{|\xi|}\}\leq\dfrak_S^\lambda$ because $\dfrak_S^{|\xi|}\leq\dfrak_S^\lambda$ (Lemma \ref{basic}(ii))  and $\dfrak_S^\lambda(\leq^*)\leq\dfrak_S^\lambda$.
 
For $\xi<\lambda$, choose $D_\xi\subseteq S^\xi$ $\leq$-dominating with $|D_\xi|=\dfrak_S^{|\xi|}$. Take a $\leq^*$-dominating family $D\subseteq S^\lambda$. For $g\in D$ and $h\in D_\xi$ with $\xi<\lambda$ define the function $f_{g,h}\in S^\lambda$ by
 \[ f_{g,h}(\beta)
   := 
     \begin{cases}
     g(\beta) & \emph{if $\beta\geq\xi$,}\\
       h(\beta) & \emph{if $\beta<\xi$.}\\
     \end{cases}
\] 
Since $|\{f_{g,h}\,|\,g\in D\,\wedge\, \exists \xi<\lambda(h\in D_\xi)\}|\leq \dfrak_S^\lambda(\leq^*)\cdot\sup_{\xi<\lambda}\{\dfrak_S^{|\xi|}\}\cdot\lambda=\dfrak_S^\lambda(\leq^*)\cdot\sup_{\xi<\lambda}\{\dfrak_S^{|\xi|}\}$ by Lemma \ref{lemmd}, it sufficies to prove that this family is $\leq $-dominating. To this end let $f\in S^\lambda$. Find $g\in D$ and $\xi<\lambda$ such that $f(\beta)\leq g(\beta)$ for all $\beta\geq\xi$. Then, for $\beta<\xi$ set $h_\xi(\beta):=\max\{f(\beta),g(\beta)\}\in S^\xi$, so there is some $h\in D_\xi$ such that $h_\xi\leq h$. Therefore, $f_{g,h}$ dominates $f$ everywhere.  
\end{proof}

It is known that $\dfrak_\lambda^\lambda=\dfrak_\lambda$ when $\lambda$ is regular, even more, this follows from Theorem \ref{thmd} because $\dfrak_\lambda^\kappa=\lambda$ when $\kappa<\lambda$. However, $\dfrak_\lambda^\lambda=\dfrak_{\cf(\lambda)}^\lambda$ in general. More details about $\dfrak_\kappa^\lambda$ can be found in \cite{brenhigher}.

\begin{lemma}\label{lemax}
$\Dbf_{\kappa\times\lambda}^{\lambda}\eqT\Dbf_\kappa^\lambda\otimes\Dbf_\lambda^\lambda$. In particular, $\dfrak_{\kappa\times\lambda}^{\lambda}=\max\{\dfrak_{\kappa}^{\lambda},\dfrak_\lambda^\lambda\}$.
\end{lemma}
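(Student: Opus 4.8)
The plan is to observe that $\Dbf_{\kappa\times\lambda}^{\lambda}$ and $\Dbf_\kappa^\lambda\otimes\Dbf_\lambda^\lambda$ are not merely Tukey equivalent but actually \emph{isomorphic} as relational systems, after which both the Tukey equivalence and the cardinal equation follow formally. First I would exhibit the coordinate-splitting bijection: every $f\in(\kappa\times\lambda)^{\lambda}$ can be written as $f(\xi)=(f_0(\xi),f_1(\xi))$ with $f_0\in\kappa^\lambda$ and $f_1\in\lambda^\lambda$, and the map $f\mapsto(f_0,f_1)$ is a bijection from $(\kappa\times\lambda)^{\lambda}$ onto $\kappa^\lambda\times\lambda^\lambda$, with inverse $(X,Y)\mapsto\big(\xi\mapsto(X(\xi),Y(\xi))\big)$. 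Since the order on $\kappa\times\lambda$ is componentwise, for all $f,g\in(\kappa\times\lambda)^{\lambda}$ we have $f\leq g$ iff $f_0\leq g_0$ pointwise in $\kappa^\lambda$ and $f_1\leq g_1$ pointwise in $\lambda^\lambda$, which is exactly $(f_0,f_1)\,R_{\otimes}\,(g_0,g_1)$. Thus the splitting map carries the relation of $\Dbf_{\kappa\times\lambda}^{\lambda}$ onto that of $\Dbf_\kappa^\lambda\otimes\Dbf_\lambda^\lambda$ in both directions.

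Using the splitting map as $\Psi_1$ on the domain and its inverse as $\Psi_2$ on the range, I get $\Dbf_{\kappa\times\lambda}^{\lambda}\leqT\Dbf_\kappa^\lambda\otimes\Dbf_\lambda^\lambda$: indeed $\Psi_1(f)=(f_0,f_1)\,R_{\otimes}\,(X',Y')$ says $f_0\leq X'$ and $f_1\leq Y'$ pointwise, hence $f(\xi)\leq(X'(\xi),Y'(\xi))=\Psi_2(X',Y')(\xi)$ for every $\xi<\lambda$, i.e. $f\leq\Psi_2(X',Y')$. Interchanging the two maps gives the symmetric computation for $\Dbf_\kappa^\lambda\otimes\Dbf_\lambda^\lambda\leqT\Dbf_{\kappa\times\lambda}^{\lambda}$, so $\Dbf_{\kappa\times\lambda}^{\lambda}\eqT\Dbf_\kappa^\lambda\otimes\Dbf_\lambda^\lambda$.

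For the cardinal equation I would invoke the Fact of Blass quoted above: $\dfrak_{\kappa\times\lambda}^{\lambda}=\dfrak(\Dbf_\kappa^\lambda\otimes\Dbf_\lambda^\lambda)$ lies between $\max\{\dfrak_\kappa^\lambda,\dfrak_\lambda^\lambda\}$ and $\dfrak_\kappa^\lambda\cdot\dfrak_\lambda^\lambda$. The only point requiring a word is that the upper product collapses to the maximum: since $\lambda$ is infinite and $\lambda$ as a directed order has no maximum, Lemma \ref{lemmd} gives $\dfrak_\lambda^\lambda\geq\dfrak_\lambda^\lambda(\leq^*)>\lambda\geq\aleph_0$, and likewise $\dfrak_\kappa^\lambda$ is infinite, so the product of these two infinite cardinals equals their maximum. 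This yields $\dfrak_{\kappa\times\lambda}^{\lambda}=\max\{\dfrak_\kappa^\lambda,\dfrak_\lambda^\lambda\}$. I do not expect a genuine obstacle here: the entire content is the bookkeeping of the coordinate isomorphism, and the sole substantive ingredient beyond it is the infinitude of the two dominating numbers, needed only to pass from the product bound of Blass's Fact to the maximum.
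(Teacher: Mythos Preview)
Your proof is correct and follows essentially the same approach as the paper: the coordinate-splitting bijection $F\mapsto(f_F,g_F)$ and its inverse are exactly the Tukey maps the paper uses. You are actually more explicit than the paper in deriving the ``In particular'' clause via Blass's Fact and the infinitude of $\dfrak_\lambda^\lambda$; note only that your aside ``likewise $\dfrak_\kappa^\lambda$ is infinite'' is not needed (and could fail if $\kappa$ were finite), since the infinitude of $\dfrak_\lambda^\lambda$ alone already collapses the product to the maximum.
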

\begin{proof}
To define $\Psi_1:(\kappa\times\lambda)^\lambda\to\kappa^\lambda\times\lambda^\lambda$, for  $F\in(\kappa\times\lambda)^\lambda$ define $f_F\in\kappa^\lambda$ and $g_F\in\lambda^\lambda$ by setting $f_F(\xi):=F(\xi)_0$ and $g_F(\xi):=F(\xi)_1$. Put $\Psi_1(F):=(f_F,g_F)$. 
Now, define $\Psi_2:\kappa^\lambda\times\lambda^\lambda\to(\kappa\times\lambda)^\lambda$ as follows: For $f\in\kappa^\lambda$ and $g\in\lambda^\lambda$ set $F_{f,g}\in(\kappa\times\lambda)^\lambda$ by setting, for any $\zeta<\lambda$, $F_{f,g}(\zeta)=(f(\zeta),g(\zeta))$. Put $\Psi_2(f,g):=F_{f,g}$. It is clear that if $(f_F,g_F)\leq_\otimes(f,g)$ then $F\leq_{\kappa\times\lambda}F_{f,g}$. Also, if $F_{f,g}\leq_{\kappa\times\lambda}F$ then $(f,g)\leq_\otimes(f_F,g_F)$.
\end{proof}






\begin{definition}
Let  $\gamma, \pi$ be ordinals. A \textit{simple matrix iteration} $\Por=\langle\Por_{\alpha, \xi},\dot{\Qor}_{\alpha,\xi}\,|\,\,\alpha\leq\gamma, 0\leq\xi\leq\pi\rangle$ fullfils the following requirements.
\begin{enumerate}
    \item[(i)] $\cof(\gamma)>\omega$, 
    \item[(ii)] $\Qnm_{\alpha,0}=\Por_{\alpha, 1}=\Cor_\alpha:=\mathrm{Fn}(\alpha\times\omega,2)$, 
    \item[(iii)] for each $0<\xi<\pi$, $\Delta(\xi)<\gamma$ is non-limit and $\dot{\Qbb}_\xi$ is a $\Por_{\Delta(\xi),\xi}$-name of a poset such that $\Por_{\gamma,\,\xi}$ forces it to be ccc, and
    \item[(iv)] $\Por_{\alpha,\xi+1}=\Por_{\alpha,\,\xi}\ast\dot{\Qbb}_{\alpha,\xi}$, where
\end{enumerate}  
\[\dot{\mathbb{Q}}_{\alpha,\xi}:=
\begin{cases}
    \Qnm_\xi & \textrm{if  $\alpha\geq\Delta(\xi)$,}\\
    \mathds{1}         & \textrm{otherwise,}
\end{cases}\]
\begin{enumerate}
    \item[(v)] for $\xi$ limit, $\Por_{\alpha,\xi}=\limdir_{\eta<\xi}\Por_{\alpha,\eta}$. 
\end{enumerate}
\end{definition}

As a consequence, $\alpha\leq\beta\leq\gamma$ and $\xi\leq\eta\leq\pi$ imply $\Por_{\alpha,\xi}\lessdot\Por_{\beta,\eta}$. 

\begin{lemma}[{See e.g. \cite[Cor. 2.6]{mejiavert}}]\label{realint}
 Assume that $\Por=\langle\Por_{\alpha, \xi},\dot{\Qor}_{\alpha,\xi}\,|\,\,\alpha\leq\gamma, \xi\leq\pi\rangle$ is a simple matrix iteration with $\cf(\gamma)>\omega$. 
Then, for any $\xi\leq\pi$,
\begin{enumerate}
    \item[(a)]  $\Por_{\gamma,\xi}$ is the direct limit of $\la\Por_{\alpha,\xi}:\alpha<\gamma\ra$, and
    \item[(b)] if $\dot{f}$ is a $\Por_{\gamma,\xi}$-name of a function from $\omega$ into $\bigcup_{\alpha<\gamma}V_{\alpha,\xi}$ then $\dot{f}$ is forced to be equal to a $\Por_{\alpha,\xi}$-name for some $\alpha<\gamma$.  In particular, the reals in $V_{\gamma,\xi}$ are precisely the reals in $\bigcup_{\alpha<\gamma}V_{\alpha,\xi}$.
\end{enumerate}  
\end{lemma}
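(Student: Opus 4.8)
The plan is to establish both items by induction on $\xi\leq\pi$, proving item (a) at each stage and then deducing item (b) from (a) together with the ccc-ness of the matrix. Before starting the induction I would record the two facts it rests on: first, that each $\Por_{\gamma,\xi}$ is ccc, since Cohen forcing is ccc, each $\Qnm_\xi$ is forced to be ccc by clause (iii), and finite support limits of ccc posets are ccc; and second, the complete embeddings $\Por_{\alpha,\xi}\lessdot\Por_{\beta,\eta}$ for $\alpha\leq\beta$, $\xi\leq\eta$ already noted after the definition. Throughout, the assertion that $\Por_{\gamma,\xi}$ is the direct limit of $\la\Por_{\alpha,\xi}:\alpha<\gamma\ra$ is unpacked as: every condition of $\Por_{\gamma,\xi}$ already lies in $\Por_{\alpha,\xi}$ for some $\alpha<\gamma$.

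For item (a), the base cases $\xi=0$ (trivial poset) and $\xi=1$ are easy: in the Cohen stage $\Por_{\gamma,1}=\Fn(\gamma\times\omega,2)$, a condition has finite domain, so its first coordinates form a finite subset of $\gamma$, which is bounded below $\gamma$ because $\cf(\gamma)>\omega$ forces $\gamma$ to be a limit; hence the condition lies in $\Fn(\alpha\times\omega,2)=\Por_{\alpha,1}$ for some $\alpha<\gamma$. For $\xi$ limit I would use that column $\xi$ is formed with direct limits at limit stages (clause (v)), so any condition of $\Por_{\gamma,\xi}$ has finite $\xi$-support and therefore lies in $\Por_{\gamma,\eta}$ for some $\eta<\xi$; the inductive hypothesis at $\eta$ then places it in some $\Por_{\alpha,\eta}\subseteq\Por_{\alpha,\xi}$.

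The delicate case is the successor step $\xi\to\xi+1$, which I expect to be the main obstacle. Here $\Por_{\gamma,\xi+1}=\Por_{\gamma,\xi}\ast\dot{\Qbb}_{\gamma,\xi}$ with $\dot{\Qbb}_{\gamma,\xi}=\Qnm_\xi$ (as $\gamma\geq\Delta(\xi)$), so a typical condition is a pair $(p,\dot{q})$ with $p\in\Por_{\gamma,\xi}$ and $\dot{q}$ a $\Por_{\gamma,\xi}$-name for an element of $\Qnm_\xi$. By the inductive hypothesis $p$ lies in some $\Por_{\alpha_0,\xi}$. The subtle point is the second coordinate: since the full generic below $\gamma$ may influence $\dot{q}$, it need not literally be a $\Por_{\Delta(\xi),\xi}$-name. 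However, because $\Qnm_\xi$ is a $\Por_{\Delta(\xi),\xi}$-name, every value $\dot q[G]$ lies in $V_{\Delta(\xi),\xi}$; hence the conditions forcing $\dot{q}$ to equal some fixed $\Por_{\Delta(\xi),\xi}$-name are dense below $p$, and a maximal antichain $A$ of such conditions is countable by ccc. Using the inductive direct-limit hypothesis, each $r\in A$ lies in some $\Por_{\beta_r,\xi}$ with $\beta_r<\gamma$, and since $A$ is countable and $\cf(\gamma)>\omega$, all the $\beta_r$ together with $\alpha_0$ and $\Delta(\xi)$ stay below a single $\alpha_1<\gamma$. Reassembling the associated names $\dot q_r$ over $A$ yields a $\Por_{\alpha_1,\xi}$-name equal to $\dot{q}$, so $(p,\dot{q})$ is equivalent to a condition of $\Por_{\alpha_1,\xi+1}$, as required.

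Finally, item (b) follows from (a) and ccc. Given a $\Por_{\gamma,\xi}$-name $\dot f$ for a function $\omega\to\bigcup_{\alpha<\gamma}V_{\alpha,\xi}$, for each $n<\omega$ I would take a maximal antichain $A_n$ whose members decide both the value $\dot f(n)$ and a level $\beta<\gamma$ with $\dot f(n)\in V_{\beta,\xi}$, so that this value is named by a $\Por_{\beta,\xi}$-name. By ccc each $A_n$ is countable, by (a) each condition in $A_n$ lies in some $\Por_{\beta,\xi}$ with $\beta<\gamma$, and each decided value uses a name below some level $<\gamma$; since there are only countably many such data and $\cf(\gamma)>\omega$, they all sit below one $\alpha^\ast<\gamma$. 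Assembling the $A_n$ with the chosen names produces a $\Por_{\alpha^\ast,\xi}$-name forced to equal $\dot f$. The ``in particular'' clause is the special case where the range is $\omega\subseteq V_{\alpha,\xi}$: every real of $V_{\gamma,\xi}$ is then captured at some stage $\alpha<\gamma$, while the reverse inclusion is immediate from $\Por_{\alpha,\xi}\lessdot\Por_{\gamma,\xi}$.
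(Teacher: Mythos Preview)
The paper does not include a proof of this lemma: it is stated with the citation ``See e.g.\ \cite[Cor.~2.6]{mejiavert}'' and no argument is given. Your proposal is therefore not comparable to a proof in the paper, but it is essentially the standard argument for this fact and is correct. The induction on $\xi$ with the three cases (Cohen base, finite-support limit, two-step successor) is exactly how this is usually done; the successor step is indeed the only place with content, and your use of a countable maximal antichain below $p$ deciding $\dot q$ as a $\Por_{\Delta(\xi),\xi}$-name, together with the inductive hypothesis and $\cf(\gamma)>\omega$ to bound everything below some $\alpha_1<\gamma$, is right. One small point you left implicit but which makes the reassembly work: since $\Por_{\alpha_1,\xi}\lessdot\Por_{\gamma,\xi}$, the countable antichain $A\subseteq\Por_{\alpha_1,\xi}$ you built remains a maximal antichain below $p$ in $\Por_{\alpha_1,\xi}$, so the mixed name $\dot q'$ you form from the $\dot q_r$ really is a $\Por_{\alpha_1,\xi}$-name with $p\Vdash\dot q=\dot q'$. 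Item (b) then follows exactly as you describe.
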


\begin{theorem}[{\cite[Thm. 10 \& Cor. 1]{M}}]\label{matsizebd}
  Let $\Por=\langle\Por_{\alpha, \xi},\dot{\Qor}_{\alpha,\xi}\,|\,\,\alpha\leq\gamma, \xi\leq\pi\rangle$ be a simple matrix iteration. If $\gamma$ has uncountable cofinality, then $\Por_{\gamma,\pi}$ forces $\non(\Mwf)=\bfrak(\Ed)\leq\cf(\gamma)\leq\dfrak(\Ed)=\cov(\Mwf)$. 
\end{theorem}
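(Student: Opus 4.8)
The plan is to reduce the statement to the two inequalities $\bfrak(\Ed)\le\cf(\gamma)\le\dfrak(\Ed)$, since the flanking equalities $\non(\Mwf)=\bfrak(\Ed)$ and $\cov(\Mwf)=\dfrak(\Ed)$ hold in any model of ZFC by Example~\ref{exam}(1) and so pass to the extension $V_{\gamma,\pi}$. Both inequalities will be read off from the cofinal sequence of Cohen reals that the matrix adds in the vertical direction. For $\alpha<\gamma$ let $c_\alpha$ be the Cohen real added by $\Cor_{\alpha+1}=\Fn((\alpha+1)\times\omega,2)$ over $\Cor_\alpha$ at column $\alpha$; since $\Fn(\{\alpha\}\times\omega,2)$ is forcing-equivalent to Cohen forcing we may regard $c_\alpha\in\omega^\omega$. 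The one nontrivial fact I will lean on — the genuine matrix-theoretic ingredient — is that $c_\alpha$ is Cohen not merely over $V_{\alpha,0}$ but over the entire intermediate model $V_{\alpha,\pi}$. Granting this, I use the elementary forcing fact that for a Cohen real $c$ over a model $W$ and any $g\in W\cap\omega^\omega$ one has $c=^{\infty}g$ (the set of reals infinitely equal to $g$ is comeager, so $c$ lands in it); equivalently, no $g\in W$ satisfies $c\neq^* g$.

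First I would prove $\bfrak(\Ed)\le\cf(\gamma)$. Fix $C\subseteq\gamma$ cofinal of order type $\cf(\gamma)$ and set $E:=\{c_\alpha\mid\alpha\in C\}$, a family of size $\cf(\gamma)$; I claim it is $\Ed$-unbounded. Given any $g\in V_{\gamma,\pi}\cap\omega^\omega$, Lemma~\ref{realint}(b) (using $\cf(\gamma)>\omega$) places $g$ in some $V_{\beta,\pi}$ with $\beta<\gamma$. Choosing $\alpha\in C$ with $\alpha>\beta$, the real $c_\alpha$ is Cohen over $V_{\alpha,\pi}\supseteq V_{\beta,\pi}$, so $c_\alpha=^{\infty}g$ and hence $g$ does not $\Ed$-dominate $c_\alpha$. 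As $g$ was arbitrary, no single real $\neq^*$-dominates all of $E$, so $E$ is $\Ed$-unbounded and $\bfrak(\Ed)\le|E|=\cf(\gamma)$.

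Next I would prove $\cf(\gamma)\le\dfrak(\Ed)$. Suppose toward a contradiction that $D\subseteq\omega^\omega$ is $\Ed$-dominating with $|D|<\cf(\gamma)$. By Lemma~\ref{realint}(b) each $g\in D$ appears in some $V_{\alpha_g,\pi}$ with $\alpha_g<\gamma$; since $\cf(\gamma)$ is regular and $|D|<\cf(\gamma)$, the ordinal $\beta:=\sup_{g\in D}\alpha_g$ is still $<\gamma$, so $D\subseteq V_{\beta,\pi}$. Then $c_\beta$ is Cohen over $V_{\beta,\pi}$, whence $c_\beta=^{\infty}g$ for every $g\in D$, i.e.\ no member of $D$ is $\neq^*$-above $c_\beta$. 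This contradicts $D$ being $\Ed$-dominating, so $|D|\ge\cf(\gamma)$ and therefore $\dfrak(\Ed)\ge\cf(\gamma)$.

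The hard part is precisely the ingredient flagged at the outset: that the column-$\alpha$ Cohen real is Cohen over the whole model $V_{\alpha,\pi}$, not just over the bottom row. This is where the matrix structure is indispensable, and it is exactly the content imported from \cite{M}: one factors $\Por_{\alpha+1,\pi}$, using the finite-support, direct-limit shape of the construction together with the fact that the iterands indexed by $\xi$ with $\Delta(\xi)\le\alpha$ interact with the new Cohen coordinate only through $\Por_{\alpha,\pi}$, to conclude that the Cohen coordinate at column $\alpha$ remains generic over $V_{\alpha,\pi}$. Everything else is the routine transfer through Example~\ref{exam}(1) and the direct-limit absorption of reals from Lemma~\ref{realint}(b), with $\cf(\gamma)>\omega$ entering twice: to invoke Lemma~\ref{realint}(b) at $\xi=\pi$, and to keep $\sup_{g\in D}\alpha_g$ below $\gamma$.
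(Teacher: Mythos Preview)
The paper does not prove this theorem; it is quoted from \cite{M} without argument, so there is no in-paper proof to compare against. Your proposal is the standard argument and is correct: the only substantive step is that the column-$\alpha$ Cohen real stays Cohen over $V_{\alpha,\pi}$, which you rightly isolate as the matrix-theoretic input coming from \cite{M}, and once that is granted the two inequalities follow exactly as you wrote via Lemma~\ref{realint}(b) and the comeagerness of $\{x:x=^\infty g\}$.
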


To finish this section, we review the followig forcing notion: \emph{Localization forcing} is the poset 
   \[\Loc:=\{\varphi\in\Swf(\omega,\id_\omega):\exists m<\omega\forall i<\omega(|\varphi(i)|\leq m)\}\] 
   ordered by $\varphi'\leq\varphi$ iff $\varphi(i)\subseteq\varphi'(i)$ for every $i<\omega$. Recall that this poset is $\sigma$-linked and that it adds an slalom $\varphi^*$ in $\Scal(\omega,\id_\omega)$ that localizes all the ground model reals in $\omega^\omega$, that is, $x\in^*\varphi^*$ for any $x\in\omega^\omega$ in the ground model.

\section{a connection between $\SNcal$ and $\dfrak_{\kappa\times\lambda}^\lambda$}
In this section  we prove Theorem \ref{mainth1}.

\begin{definition}\label{defsmz}
We say that $X\subseteq2^\omega$ \textit{has strong measure zero} iff for each $f\in\omega^\omega$ there is some $\sigma \in (2^{<\omega})^\omega$ with $\hgt_\sigma=f$ such that $X\subseteq \bigcup_{n<\omega}[\sigma(n)]$. 

Denote $\SNcal:=\{X\subseteq 2^\omega\,|\,X\text{\ has strong measure zero}\}$.
\end{definition}

Denote $\pw_k:\omega\to\omega$ the function defined by $\pw_k(i):=i^k$, and define the relation $\ll$ on $\omega^\omega$ by
$f\ll g \text{\ iff } \forall{k<\omega}(f\circ\pw_k\leq^* g)$. 

For $\sigma \in (2^{<\omega})^\omega$ set
\begin{align*}
[\sigma]_\infty:&=\{x \in 2^\omega\,|\,\exists^{\infty}{n < \omega}^{}(\sigma(n) \subseteq x)\} \\   
                &=\bigcap_{n<\omega} \bigcup_{m \geqslant n}[\sigma(m)]
\end{align*}

\begin{definition}[Yorioka {\cite{Yorioka}}]\label{defidealyorioka}
Let $f\in\omega^\omega$ be an increasing function. Define
\[\Iwf_{f}:=\{X\subseteq 2^{\omega}\,|\,\exists{\sigma \in (2^{<\omega})^{\omega}}(X \subseteq [\sigma]_\infty\text{\ and }h_{\sigma}\gg f )\}.\]
Any family of this form is called a \textit{Yorioka ideal}.
\end{definition}

Yorioka \cite{Yorioka} has proved that $\Iwf_f$ is a $\sigma$-ideal when $f$ is increasing. Moreover, $\SNcal=\bigcap\{\Iwf_f\,|\,f\textrm{\ increasing}\}$. Denote 
$\minnon:=\min\{\non(\Iwf_f)\,|\,f\text{\ increasing}\}$.

\begin{lemma}[{\cite[Lemma 3.7]{Yorioka}}]\label{factCohen}
Let $A$ be a perfect subset of $2^\omega$. Then there some $f\in\omega^\omega$ such that $A\notin\Iwf_f$. 
\end{lemma}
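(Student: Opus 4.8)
The statement is, up to the choice of a single witness, the classical fact that a perfect set is not of strong measure zero. The plan is to argue by contraposition together with a Borel--Cantelli estimate. Recall that $A\in\Iwf_f$ requires a $\sigma$ with $A\subseteq[\sigma]_\infty$ and $\hgt_\sigma\gg f$. It therefore suffices to produce an $f$ such that \emph{no} $\sigma$ with $f\leq^*\hgt_\sigma$ satisfies $A\subseteq[\sigma]_\infty$, since this weaker hypothesis is implied by $\hgt_\sigma\gg f$: taking $k=1$ in the definition of $\gg$ and using $\pw_1=\id_\omega$ gives $f=f\circ\pw_1\leq^*\hgt_\sigma$. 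In other words, I only need to defeat those $\sigma$ whose nodes are eventually at least as long as $f$ prescribes, and the full strength of $\gg$ will not be used.

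First I would fix a perfect tree $T$ with $A=[T]$ and build inside it a uniform Cantor scheme. By recursion on $j$ I choose finite antichains $U_j\subseteq T$ whose nodes all have a common length $\ell_j$, with $U_0=\{\langle\rangle\}$, $\ell_0=0$, $|U_j|=2^j$, and each node of $U_j$ extended by exactly two nodes of $U_{j+1}$; this is possible because every node of the perfect tree $T$ has two incomparable extensions in $T$, which may then be stretched to a common length $\ell_{j+1}$. Let $A':=\{x\in 2^\omega:\forall j\,\exists u\in U_j\,(u\subset x)\}\subseteq A$ and let $\mu$ be the uniform probability measure on $A'$ assigning mass $2^{-j}$ to each node of $U_j$. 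The basic estimate is that for any $s\in 2^{<\omega}$ with $\ell_j\leq|s|<\ell_{j+1}$ one has $\mu([s])\leq 2^{-j}$, since $[s]\cap A'\subseteq[s\upharpoonright\ell_j]\cap A'$ and the latter set has measure $2^{-j}$ or $0$ according as $s\upharpoonright\ell_j\in U_j$ or not.

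Now I would define $f\in\omega^\omega$ by $f(n):=\ell_{c(n)}$, where $c\to\infty$ is chosen so fast that $\sum_n 2^{-c(n)}<\infty$, for instance $c(n):=2\lceil\log_2(n+2)\rceil$, which gives $2^{-c(n)}\leq(n+2)^{-2}$ (and $f$ may be taken increasing, as replacing it by any larger increasing function only shrinks $\Iwf_f$). Suppose toward a contradiction that some $\sigma$ has $A\subseteq[\sigma]_\infty$ and $\hgt_\sigma\gg f$. Then $f\leq^*\hgt_\sigma$, so $|\sigma(n)|\geq f(n)=\ell_{c(n)}$ for all large $n$; hence the index $j_n$ determined by $\ell_{j_n}\leq|\sigma(n)|<\ell_{j_n+1}$ satisfies $j_n\geq c(n)$, and therefore $\mu([\sigma(n)])\leq 2^{-c(n)}$. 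Consequently $\sum_n\mu([\sigma(n)])<\infty$, and Borel--Cantelli yields $\mu([\sigma]_\infty)=\mu(\limsup_n[\sigma(n)])=0$. But $A'\subseteq A\subseteq[\sigma]_\infty$ forces $\mu([\sigma]_\infty)\geq\mu(A')=1$, a contradiction. Hence no such $\sigma$ exists and $A\notin\Iwf_f$.

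The only genuine work lies in constructing the uniform scheme $\langle U_j\rangle$ and verifying the measure bound $\mu([s])\leq 2^{-j}$; both are routine for perfect trees. The main point to watch is calibrating the growth of $c$ (hence of $f$ against the splitting levels $\ell_j$) to the convergence of $\sum_n 2^{-c(n)}$, and this is the only place where the argument must be tuned. As noted above, the instance $k=1$ of $\gg$ already suffices, so the proof in fact shows that $A$ avoids even the larger ideal obtained by replacing $\gg$ with $\leq^*$.
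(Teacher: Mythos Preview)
Your argument is correct. The paper does not supply its own proof of this lemma; it simply quotes it from \cite[Lemma~3.7]{Yorioka}. Your approach---embedding a uniform Cantor scheme in the perfect tree, equipping the resulting $A'\subseteq A$ with the balanced probability measure $\mu$, calibrating $f$ against the splitting levels so that $\sum_n\mu([\sigma(n)])<\infty$ whenever $f\leq^*\hgt_\sigma$, and concluding via Borel--Cantelli---is the standard route and is essentially what underlies Yorioka's proof as well (it is just the classical fact that perfect sets are not strong measure zero, read through the characterization $\SNcal=\bigcap_f\Iwf_f$).

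Two minor remarks. First, your estimate $\mu([s])\leq 2^{-j}$ already holds for every $s$ with $|s|\geq\ell_j$, so the auxiliary index $j_n$ is not needed: $|\sigma(n)|\geq\ell_{c(n)}$ directly gives $\mu([\sigma(n)])\leq 2^{-c(n)}$. Second, when you replace $f$ by a larger increasing $g$, what carries over is the \emph{stronger} property ``no $\sigma$ with $f\leq^*\hgt_\sigma$ covers $A$'' (since $g\leq^*\hgt_\sigma$ implies $f\leq^*\hgt_\sigma$), rather than the bare conclusion $A\notin\Iwf_f$; you are in fact using exactly this, so the step is sound, but the parenthetical phrasing ``replacing it by any larger increasing function only shrinks $\Iwf_f$'' slightly understates what is needed.
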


The next definition plays a central role in the main results.

\begin{definition}\label{defsuitmatrix} Let $S$ be a directed partial order. For each increasing function $f\in\omega^\omega$, we say that a family $A^f=\la A_{i}^f\ |\ i\in S\ra$ of subsets of $2^\omega$ is an $\Iwf_f$ \textit{directed system on $S$} it if fulfills the following:
\begin{itemize}
    \item[(I)] $\forall{i\in S}(A_{i}^f\subseteq 2^\omega\textrm{\ is dense  }G_\delta\textrm{\ and\ } \,A_{i}^f\in\Iwf_{f})$;
    \item[(II)] $\forall{i, j\in S}(i\leq j\to A_{i}^f\subseteq A_{j}^f)$ and
    \item[(III)] $\la A_{i}^{f}\,\,|\ i\in S\ra$ is cofinal in $\Iwf_{f}$.
    \end{itemize}
Assume from now on that $S$ has a minimun $i_0$. If $\lambda$ is a cardinal and there is some dominating family $\{ f_\gamma\,|\,\gamma<\lambda\}$ on $\omega^\omega$ such that  $A^{f_\gamma}=\la A_{i}^{f_\gamma}\,|\,i\in S\ra$ is an $\Iwf_{f_\gamma}$ direct system and 
\[ \forall \gamma<\lambda\Big(\bigcap_{\eta<\gamma}A_{i_0}^{f_\eta}\notin\Iwf_{f_\gamma}\Big),\] 
then we say that $\la A^{f_\gamma}\ |\ \gamma<\lambda\ra$ is a $\lambda$ \textit{dominating system on $S$}. 
\end{definition}

\begin{lemma}\label{newversion}
 Let $S$ be a directed partial order and let $\lambda$ be a uncountable cardinal. Assume $\cov(\Mwf)=\dfrak=\lambda$ and that, for any increasing function $f\in\omega^{\omega}$, there is some $\Iwf_f$ directed system on $S$. Then there is some $\lambda$-dominating system on $S$.
\end{lemma}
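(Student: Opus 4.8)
The plan is to build the $\lambda$-dominating system by transfinite recursion along a dominating family, using the hypotheses $\cov(\Mwf)=\lambda$ and $\dfrak=\lambda$ together with the assumed availability of an $\Iwf_f$ directed system for every increasing $f$. First I would fix a dominating family $\{g_\gamma\mid\gamma<\lambda\}$ of increasing functions in $\omega^\omega$, witnessing $\dfrak=\lambda$; by replacing each $g_\gamma$ with a function $\gg$-dominating it (using Lemma~\ref{factCohen} to guarantee that enough room remains below the ideals), one arranges that the family is also dominating with respect to $\ll$, or at least that one can thin it out to an increasing, $\ll$-cofinal sequence. The heart of the construction is then to recursively define, for each $\gamma<\lambda$, an increasing function $f_\gamma$ together with the $\Iwf_{f_\gamma}$ directed system $A^{f_\gamma}=\la A_i^{f_\gamma}\mid i\in S\ra$ supplied by the hypothesis, in such a way that at each stage the extra diagonalization requirement
\[
\bigcap_{\eta<\gamma}A_{i_0}^{f_\eta}\notin\Iwf_{f_\gamma}
\]
is met.

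The key step is ensuring that, at stage $\gamma$, the set $\bigcap_{\eta<\gamma}A_{i_0}^{f_\eta}$ fails to lie in $\Iwf_{f_\gamma}$. Here I would exploit condition~(I): each $A_{i_0}^{f_\eta}$ is a \emph{dense $G_\delta$} subset of $2^\omega$. Since $\gamma<\lambda=\cov(\Mwf)$, the intersection $\bigcap_{\eta<\gamma}A_{i_0}^{f_\eta}$ of fewer than $\cov(\Mwf)$ many dense $G_\delta$ (hence comeager) sets is still nonempty and in fact non-meager — indeed it is comeager relative to its own Polish behaviour — so it cannot be covered by a meager set and, more to the point, it contains a perfect set. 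By Lemma~\ref{factCohen}, a perfect subset of $2^\omega$ witnesses $A\notin\Iwf_f$ for \emph{some} increasing $f$. The recursion therefore chooses $f_\gamma$ \emph{after} seeing the sets $A_{i_0}^{f_\eta}$ for $\eta<\gamma$: having fixed the comeager (perfect-set-containing) intersection $B_\gamma:=\bigcap_{\eta<\gamma}A_{i_0}^{f_\eta}$, apply Lemma~\ref{factCohen} to a perfect subset of $B_\gamma$ to obtain an increasing $f'$ with $B_\gamma\notin\Iwf_{f'}$, and then take $f_\gamma$ to be any increasing function $\gg$-dominating both $f'$ and $g_\gamma$ (since $\Iwf_f\subseteq\Iwf_{f'}$ whenever $f\gg f'$, this preserves $B_\gamma\notin\Iwf_{f_\gamma}$ while keeping the family dominating). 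Invoking the hypothesis once more yields an $\Iwf_{f_\gamma}$ directed system $A^{f_\gamma}$ on $S$, completing the inductive step.

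The main obstacle I anticipate is the interaction between the two roles of the sequence $\la f_\gamma\ra$: it must simultaneously be \emph{dominating} in $\omega^\omega$ (so that $\la A^{f_\gamma}\mid\gamma<\lambda\ra$ genuinely captures $\SNcal=\bigcap_f\Iwf_f$) and satisfy the stage-by-stage non-membership condition, which pushes each $f_\gamma$ to be large (via $\gg$) relative to data that only becomes available during the recursion. Reconciling these is where $\cov(\Mwf)=\dfrak=\lambda$ is used in tandem: the bound $\cov(\Mwf)=\lambda$ guarantees the relevant intersections stay comeager at every stage $\gamma<\lambda$ (so Lemma~\ref{factCohen} always applies), while $\dfrak=\lambda$ guarantees that the freedom to enlarge each $f_\gamma$ does not destroy dominating-ness — one interleaves the fixed family $\{g_\gamma\}$ into the construction so the resulting $\{f_\gamma\}$ remains dominating. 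A secondary technical point is verifying that $B_\gamma$ contains a perfect set: this follows because a comeager subset of $2^\omega$ is not merely nonempty but large enough to contain a perfect set, which one can extract by a standard fusion/Cantor-scheme argument inside the $G_\delta$ sets $A_{i_0}^{f_\eta}$. Once these are in hand, conditions~(I)--(III) for each $A^{f_\gamma}$ come free from the hypothesis, and the displayed diagonalization condition holds by construction, so $\la A^{f_\gamma}\mid\gamma<\lambda\ra$ is the desired $\lambda$-dominating system on $S$.
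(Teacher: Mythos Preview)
Your proposal is correct and follows the same recursive strategy as the paper's proof: fix a dominating family, and at stage $\gamma$ find a perfect set inside $B_\gamma=\bigcap_{\eta<\gamma}A_{i_0}^{f_\eta}$, apply Lemma~\ref{factCohen} to it, and take $f_\gamma$ above both the resulting function and the $\gamma$-th member of the fixed dominating family. The one place where the paper is crisper is the extraction of the perfect set: a ``standard fusion/Cantor-scheme'' as you describe it only handles countably many dense open sets, so instead the paper takes a transitive ZFC model $M$ of size ${<}\lambda=\cov(\Mwf)$ in which all the $A_{i_0}^{f_\eta}$ ($\eta<\gamma$) are coded and invokes the fact that there is a perfect set of Cohen reals over such an $M$ --- each of which lies in every dense $G_\delta$ coded in $M$, hence in $B_\gamma$.
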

\begin{proof}
Fix $i_0:=\min(S)$. Let $\la h_\gamma\,|\,\gamma<\lambda\ra$ be a dominating family. For each $\gamma<\lambda$, we denote $A_{i}^{\gamma}:=A_{i}^{f_\gamma}$. We will construct $f_\gamma$ by recursion on $\gamma<\lambda$. Assume that  $\la f_\eta\,|\,\eta<\gamma\ra$ has been constructed. Now, let us assume that $M$ is a transitive model for ZFC such that $|M|<\lambda=\cov(\Mwf)$ and $A_{i_0}^{\eta}$ is coded in $M$ for any $\eta<\gamma$.

Cohen forcing adds a perfect set $P$ of Cohen reals over $M$ (see \cite[Lemma 3.3.2]{BJ}), so  $P\subseteq \bigcap_{\eta<\gamma}A_{i_0}^{\eta}$. Since $P$ is a perfect set, there is some $g\in\omega^\omega$ such that  $P\notin\Iwf_{g}$ by Lemma \ref{factCohen}.

Choose $f_\gamma\in\omega^\omega$ increasing such that $h_\gamma\leq f_\gamma$ and $g\leq f_\gamma$. Then $\Iwf_{f_\gamma}\subseteq\Iwf_g$ and  $P\notin\Iwf_{f_\gamma}$. But $P\subseteq \bigcap_{\eta<\gamma}A_{i_0}^{\eta}$, hence $\bigcap_{\eta<\gamma}A_{i_0}^{\eta}\notin\Iwf_{g}$. 

Clearly, $\la f_\gamma\,|\,\gamma<\lambda\ra$ is a dominating family.
\end{proof}





Below, we shall prove main Theorem \ref{mainth1}(i).

\begin{theorem}\label{lemfamily}
Assume that there is some $\lambda$-dominating system on $S$. Then $\SNcal\leqT \mathbf{D}_{S}^{\lambda}$.



\end{theorem}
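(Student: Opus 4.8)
The plan is to exhibit an explicit Tukey reduction. Recall that, as a relational system, $\SNcal$ is $\la\SNcal,\SNcal,\subseteq\ra$, while $\Dbf_S^\lambda=\la S^\lambda,S^\lambda,\leq\ra$. So to prove $\SNcal\leqT\Dbf_S^\lambda$ I must produce maps $\Psi_1\colon\SNcal\to S^\lambda$ and $\Psi_2\colon S^\lambda\to\SNcal$ such that, for every $X\in\SNcal$ and every $F\in S^\lambda$,
\[\Psi_1(X)\leq F\ \Longrightarrow\ X\subseteq\Psi_2(F).\]
Let $\la A^{f_\gamma}\ |\ \gamma<\lambda\ra$ be the given $\lambda$-dominating system, with witnessing dominating family $\{f_\gamma\ |\ \gamma<\lambda\}$ and, for each $\gamma$, $A^{f_\gamma}=\la A_i^{f_\gamma}\ |\ i\in S\ra$ an $\Iwf_{f_\gamma}$ directed system. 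The one structural fact I will lean on is that
\[\SNcal=\bigcap_{\gamma<\lambda}\Iwf_{f_\gamma},\]
which holds because $\{f_\gamma\}$ is dominating: the inclusion $\subseteq$ is immediate since $\SNcal=\bigcap\{\Iwf_f\ |\ f\ \text{increasing}\}$, and for $\supseteq$ any increasing $f$ is eventually dominated by some $f_\gamma$, whence $\Iwf_{f_\gamma}\subseteq\Iwf_f$.

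Next I define the maps. For $X\in\SNcal$ and each $\gamma<\lambda$, we have $X\in\Iwf_{f_\gamma}$, so by the cofinality clause (III) of the directed system there is $i\in S$ with $X\subseteq A_i^{f_\gamma}$; I choose one such value and call it $\Psi_1(X)(\gamma)$, which defines $\Psi_1(X)\in S^\lambda$. For $F\in S^\lambda$ I set
\[\Psi_2(F):=\bigcap_{\gamma<\lambda}A_{F(\gamma)}^{f_\gamma}.\]
I claim $\Psi_2(F)\in\SNcal$: for each $\gamma$ we have $\Psi_2(F)\subseteq A_{F(\gamma)}^{f_\gamma}\in\Iwf_{f_\gamma}$ by (I), and since $\Iwf_{f_\gamma}$ is an ideal (closed under subsets), it follows that $\Psi_2(F)\in\Iwf_{f_\gamma}$ for every $\gamma$, i.e. $\Psi_2(F)\in\bigcap_{\gamma<\lambda}\Iwf_{f_\gamma}=\SNcal$.

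Finally I verify the reduction. Suppose $\Psi_1(X)\leq F$ in $S^\lambda$, that is, $\Psi_1(X)(\gamma)\leq_S F(\gamma)$ for every $\gamma<\lambda$. By the monotonicity clause (II), this gives $A_{\Psi_1(X)(\gamma)}^{f_\gamma}\subseteq A_{F(\gamma)}^{f_\gamma}$, and by the choice of $\Psi_1$ we have $X\subseteq A_{\Psi_1(X)(\gamma)}^{f_\gamma}$; hence $X\subseteq A_{F(\gamma)}^{f_\gamma}$ for every $\gamma$, so $X\subseteq\bigcap_{\gamma<\lambda}A_{F(\gamma)}^{f_\gamma}=\Psi_2(F)$. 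This is exactly $\Psi_1(X)\leq F\Rightarrow X\subseteq\Psi_2(F)$, establishing $\SNcal\leqT\Dbf_S^\lambda$. I expect no serious obstacle here: the construction is a direct transcription of the defining clauses (I)--(III) of the directed systems, and the only genuinely substantive point is the well-definedness of $\Psi_2$, namely that an intersection of sets drawn one from each ideal $\Iwf_{f_\gamma}$ is again strong measure zero, which is precisely where the dominating property of $\{f_\gamma\}$ enters. Note that the extra clause of a $\lambda$-dominating system, $\bigcap_{\eta<\gamma}A_{i_0}^{f_\eta}\notin\Iwf_{f_\gamma}$, is not needed for this direction; it is reserved for the reverse Tukey inequality of Theorem \ref{mainth1}(ii).
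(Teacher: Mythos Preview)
Your proof is correct and follows essentially the same approach as the paper: you define $\Psi_1(X)$ coordinatewise via the cofinality clause (III), set $\Psi_2(F)=\bigcap_{\gamma<\lambda}A_{F(\gamma)}^{f_\gamma}$, and verify the reduction using (II). You actually spell out more carefully than the paper why $\Psi_2(F)\in\SNcal$ (via $\SNcal=\bigcap_\gamma\Iwf_{f_\gamma}$ from the dominating property), and your remark that the extra clause $\bigcap_{\eta<\gamma}A_{i_0}^{f_\eta}\notin\Iwf_{f_\gamma}$ is unused here is correct.
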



\begin{proof}

For $X\in\SNcal$, choose $\Psi_1(X):=G_X\in S^\lambda$ such that $X\subseteq\bigcap_{\gamma<\lambda}A_{G_X(\gamma)}^{\gamma}$ by Definition \ref{defsuitmatrix} (III). Let $F\in S^\lambda$. Note that  $\bigcap_{\gamma<\lambda}A_{F(\gamma)}^{\gamma}\in\SNcal$ because $\bigcap_{\gamma<\lambda}A_{F(\gamma)}^{\gamma}\subseteq A_{F(\gamma)}^{\gamma}$ and $A_{F(\gamma)}^{\gamma}\in\Iwf_{f_\gamma}$. Define $\Psi_2(F):=\bigcap_{\gamma<\lambda}A_{F(\gamma)}^{\gamma}$

Now assume that $\Psi_1(X)\leq F$. Then $G_X(\gamma)\leq F(\gamma)$ for all $\gamma<\lambda$, so by Definition \ref{defsuitmatrix}(II),   $X\subseteq\bigcap_{\gamma<\lambda}A_{G_X(\gamma)}^{\gamma}\subseteq\bigcap_{\gamma<\lambda}A_{F(\gamma)}^{\gamma}$.



\end{proof}

As a consequence:

\begin{corollary}
If there is an $\lambda$-dominating system on $S$ then $\cof(\SNcal)\leq\dfrak_{S}^\lambda$ and $\bfrak(S)=\bfrak_{S}^{\lambda}\leq\add(\SNcal)$.
\end{corollary}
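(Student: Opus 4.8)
The plan is to derive both inequalities directly from the Tukey reduction $\SNcal\leqT\Dbf_S^\lambda$ established in Theorem \ref{lemfamily}, combined with the general monotonicity of the two cardinal invariants under $\leqT$ recorded in Section \ref{SecPre} (namely $\Rbf\leqT\Rbf'$ implies $\bfrak(\Rbf')\leq\bfrak(\Rbf)$ and $\dfrak(\Rbf)\leq\dfrak(\Rbf')$).

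First I would note that, viewing the ideal $\SNcal$ as the relational system $\la\SNcal,\SNcal,\subseteq\ra$, one has $\dfrak(\SNcal)=\cof(\SNcal)$ and $\bfrak(\SNcal)=\add(\SNcal)$ straight from the definitions: a $\subseteq$-dominating subfamily of $\SNcal$ is precisely a cofinal one, while a $\subseteq$-unbounded subfamily is precisely one whose union fails to lie in $\SNcal$ (if some $Y\in\SNcal$ bounded the family, its union would be a subset of $Y$, hence in $\SNcal$, and conversely). Likewise $\dfrak(\Dbf_S^\lambda)=\dfrak_S^\lambda$ and $\bfrak(\Dbf_S^\lambda)=\bfrak_S^\lambda$ by definition (Example \ref{exam}(4)). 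Since $\SNcal\leqT\Dbf_S^\lambda$ yields $\dfrak(\SNcal)\leq\dfrak(\Dbf_S^\lambda)$ and $\bfrak(\Dbf_S^\lambda)\leq\bfrak(\SNcal)$, I immediately obtain $\cof(\SNcal)\leq\dfrak_S^\lambda$ and $\bfrak_S^\lambda\leq\add(\SNcal)$.

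It then remains only to rewrite $\bfrak_S^\lambda$ as $\bfrak(S)$, which is supplied by the equality $\bfrak(S)=\bfrak_S^\lambda$ of Lemma \ref{basic}(i). The one hypothesis of that lemma left to verify is that $S$ has no maximum, and I expect this to be the sole (minor) obstacle. I would argue that the mere existence of a $\lambda$-dominating system already precludes a maximum: if $m$ were the top element of $S$, then for every increasing $f$ the set $A_m^f$ would, by Definition \ref{defsuitmatrix}(II) and (III), contain every member of the cofinal family $\la A_i^f\mid i\in S\ra$ and hence be a $\subseteq$-largest element of $\Iwf_f$; since $\Iwf_f$ contains every singleton, this forces $A_m^f=2^\omega$, contradicting $A_m^f\in\Iwf_f\subseteq\Nwf$ from Definition \ref{defsuitmatrix}(I). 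With $S$ having no maximum, Lemma \ref{basic}(i) gives $\bfrak(S)=\bfrak_S^\lambda\leq\add(\SNcal)$, which together with the bound on $\cof(\SNcal)$ completes the proof.
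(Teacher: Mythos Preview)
Your proof is correct and follows exactly the route the paper intends: the corollary is stated with no proof, merely prefaced by ``As a consequence,'' meaning it is to be read off from the Tukey reduction of Theorem \ref{lemfamily} together with the general inequalities $\bfrak(\Rbf')\leq\bfrak(\Rbf)$, $\dfrak(\Rbf)\leq\dfrak(\Rbf')$ and the identity $\bfrak(S)=\bfrak_S^\lambda$ from Lemma \ref{basic}(i). Your extra verification that the existence of a $\lambda$-dominating system forces $S$ to have no maximum is a detail the paper leaves implicit, and your argument for it is sound.
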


We conclude this section with the proof of Theorem \ref{mainth1}(ii), which will be used in the final section. To prove it, we need the following lemma.

\begin{lemma}\label{one}
Let $\kappa<\lambda$ be infinite cardinals. Assume $\minnon\geq\lambda$ and that there is some $\lambda$-dominating system on $\kappa\times\lambda$. Then, for any $f\in\lambda^\lambda$, there exist  $G\in(\kappa\times\lambda)^\lambda$ and $\{x_\alpha^\gamma:\gamma<\lambda, \alpha<\kappa\}\subseteq2^\omega$ such that 
\begin{enumerate}[(i)]
    \item $\forall\gamma<\lambda(\{x_\alpha^{\gamma'}\,|\, \gamma'\leq\gamma,\alpha<\kappa\}\subseteq A_{G(\gamma)}^{\gamma})$,
    \item $\forall\gamma<\lambda\forall\alpha<\kappa(x_{\alpha}^{\gamma}\in\bigcap_{\gamma'<\gamma}A_{G(\gamma')}^{\gamma'}\smallsetminus A_{\alpha, f(\gamma)}^{\gamma})$, and 
    \item $\forall \gamma<\lambda(f(\gamma)\leq G(\gamma)_1)$.
\end{enumerate}
\end{lemma}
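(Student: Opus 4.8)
The plan is to build $G$ together with the points $x_\alpha^\gamma$ by simultaneous recursion on $\gamma<\lambda$, writing $A_i^\gamma:=A_i^{f_\gamma}$ for the given $\lambda$-dominating system $\la A^{f_\gamma}\mid\gamma<\lambda\ra$ on $\kappa\times\lambda$ and $i_0:=(0,0)=\min(\kappa\times\lambda)$. Suppose $G(\gamma')$ and $\{x_\alpha^{\gamma'}\mid\alpha<\kappa\}$ have been defined for every $\gamma'<\gamma$. I would carry out stage $\gamma$ in two substeps: first choose the points $x_\alpha^\gamma$ (securing (ii)), then choose $G(\gamma)$ (securing (i) and (iii)).

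The observation driving the first substep is that $B_\gamma:=\bigcap_{\gamma'<\gamma}A_{G(\gamma')}^{\gamma'}$ is not in $\Iwf_{f_\gamma}$. Indeed, by Definition \ref{defsuitmatrix}(II) and $i_0\leq G(\gamma')$ we have $A_{i_0}^{\gamma'}\subseteq A_{G(\gamma')}^{\gamma'}$ for each $\gamma'<\gamma$, hence $\bigcap_{\gamma'<\gamma}A_{i_0}^{\gamma'}\subseteq B_\gamma$; since $\bigcap_{\gamma'<\gamma}A_{i_0}^{\gamma'}\notin\Iwf_{f_\gamma}$ by the defining property of a $\lambda$-dominating system and $\Iwf_{f_\gamma}$ is downward closed, $B_\gamma\notin\Iwf_{f_\gamma}$ (this also covers $\gamma=0$, where $B_0=2^\omega$). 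As $A_{\alpha,f(\gamma)}^\gamma\in\Iwf_{f_\gamma}$ by Definition \ref{defsuitmatrix}(I), the set $B_\gamma\smallsetminus A_{\alpha,f(\gamma)}^\gamma$ is nonempty for each $\alpha<\kappa$, so I pick $x_\alpha^\gamma\in B_\gamma\smallsetminus A_{\alpha,f(\gamma)}^\gamma$, which gives (ii).

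For the second substep, set $D_\gamma:=\{x_\alpha^{\gamma'}\mid\gamma'\leq\gamma,\ \alpha<\kappa\}$. Its cardinality is $|\gamma+1|\cdot\kappa<\lambda$, since $\kappa<\lambda$ and $|\gamma|<\lambda$ force their product (the maximum of the two) below $\lambda$. Because $\minnon\geq\lambda$ yields $\non(\Iwf_{f_\gamma})\geq\lambda>|D_\gamma|$, we get $D_\gamma\in\Iwf_{f_\gamma}$, so by cofinality (Definition \ref{defsuitmatrix}(III)) there is $j=(j_0,j_1)\in\kappa\times\lambda$ with $D_\gamma\subseteq A_j^\gamma$. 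Putting $G(\gamma):=(j_0,\max\{j_1,f(\gamma)\})$ gives $j\leq G(\gamma)$, whence $D_\gamma\subseteq A_j^\gamma\subseteq A_{G(\gamma)}^\gamma$ by (II); this is (i), and $G(\gamma)_1=\max\{j_1,f(\gamma)\}\geq f(\gamma)$ is (iii).

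The recursion is well founded: the first substep consults only the $G(\gamma')$ for $\gamma'<\gamma$, and the second only the points $x_\alpha^{\gamma'}$ with $\gamma'\leq\gamma$, so there is no circularity. The one point that needs care — the main, though mild, obstacle — is the uniform verification that $B_\gamma\notin\Iwf_{f_\gamma}$ at every stage: everything hinges on threading the minimum-index lower bound $\bigcap_{\gamma'<\gamma}A_{i_0}^{\gamma'}$ through the monotonicity (II), so that the single largeness hypothesis built into the dominating system suffices and no fresh largeness assumption on the previously chosen $G(\gamma')$ is required.
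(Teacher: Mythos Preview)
Your proof is correct and follows essentially the same recursive construction as the paper: at stage $\gamma$ you use the dominating-system largeness $\bigcap_{\gamma'<\gamma}A_{i_0}^{\gamma'}\notin\Iwf_{f_\gamma}$ (together with monotonicity) to pick the $x_\alpha^\gamma$, and then use $\minnon\geq\lambda$ plus cofinality (III) to choose $G(\gamma)$. The only cosmetic differences are that the paper picks $x_\alpha^\gamma$ inside the smaller set $\bigcap_{\eta<\gamma}A_{0,0}^{\eta}$ and also avoids the previously chosen points (neither refinement is needed for the stated conclusion), while you make the choice of $G(\gamma)$ explicit via the $\max$ in the second coordinate.
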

\begin{proof}

We will recursively construct $G(\gamma)\in\kappa\times\lambda$ and $x_{\alpha}^\gamma\in2^\omega$. Assume that we already have $G(\gamma')$ and $x_{\alpha}^{\gamma'}$ for any $\gamma'<\gamma$ and $\alpha<\kappa$. Set $B_\alpha:=A_{\alpha, f(\gamma)}^\gamma\cup\{x_{\beta}^{\gamma'}\,|\,\gamma'<\gamma, \beta<\kappa\}$. Since $\{x_{\beta}^{\gamma'}\,|\,\gamma'<\gamma, \beta<\kappa\}$ has size $<\lambda$, $B_\alpha\in\Iwf_{f_\gamma}$ because $\kappa<\lambda\leq\non(\Iwf_{f_\gamma})$. Then by Definition \ref{defsuitmatrix}, there is some $x_{\alpha}^{\gamma}\in\bigcap_{\eta<\gamma}A_{0,0}^{\eta}\smallsetminus B_\alpha$. Note that $\{x_{\alpha}^{\gamma'}\,|\,\gamma'\leq\gamma, \alpha<\kappa\}\in\Iwf_{f_\gamma}$. Then there must be a $G(\gamma)\in\kappa\times\lambda$ such that $\{x_\alpha^{\gamma'}\,|\,\gamma'\leq\gamma, \alpha<\kappa\}\subseteq A_{G(\gamma)}^{\gamma}$ and $f(\gamma)\leq G(\gamma)_1$. This contruction satisfies the required conditions.
\end{proof}

\begin{lemma}\label{two} 
With the same asumptions as Lemma \ref{one} and 
$G\in(\kappa\times\lambda)^\lambda$  fulfilling its conclusion, if $\beta<\lambda$ and $\delta\leq f(\beta)$ then $\bigcap_{\gamma<\lambda}A_{G(\gamma)}^{\gamma}\not\subseteq A_{\alpha,  \delta}^{\beta}\,\,\textrm{for all}\,\,\alpha<\kappa$.
\end{lemma}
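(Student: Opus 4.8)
The plan is to prove the non-inclusion by exhibiting an explicit witness, namely a point lying in $\bigcap_{\gamma<\lambda}A_{G(\gamma)}^{\gamma}$ but outside $A_{\alpha,\delta}^{\beta}$. Fixing $\beta<\lambda$, $\delta\leq f(\beta)$ and $\alpha<\kappa$, the natural candidate supplied by Lemma \ref{one} is the point $x_\alpha^\beta$. The whole argument then reduces to verifying these two membership facts for this single point.

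First I would check that $x_\alpha^\beta\notin A_{\alpha,\delta}^{\beta}$. By clause (ii) of Lemma \ref{one} we already know $x_\alpha^\beta\notin A_{\alpha,f(\beta)}^{\beta}$. Since $\delta\leq f(\beta)$, in the order on $\kappa\times\lambda$ we have $(\alpha,\delta)\leq(\alpha,f(\beta))$, so the monotonicity property (II) of an $\Iwf_{f_\beta}$ directed system gives $A_{\alpha,\delta}^{\beta}\subseteq A_{\alpha,f(\beta)}^{\beta}$. Hence $x_\alpha^\beta\notin A_{\alpha,f(\beta)}^{\beta}$ forces $x_\alpha^\beta\notin A_{\alpha,\delta}^{\beta}$, as desired.

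Next I would show $x_\alpha^\beta\in\bigcap_{\gamma<\lambda}A_{G(\gamma)}^{\gamma}$ by a short case split on $\gamma$ relative to $\beta$. For $\gamma<\beta$, clause (ii) tells us $x_\alpha^\beta\in\bigcap_{\gamma'<\beta}A_{G(\gamma')}^{\gamma'}$, so in particular $x_\alpha^\beta\in A_{G(\gamma)}^{\gamma}$. For $\gamma\geq\beta$, clause (i) gives $\{x_{\alpha'}^{\gamma'}\mid \gamma'\leq\gamma,\ \alpha'<\kappa\}\subseteq A_{G(\gamma)}^{\gamma}$; since $\beta\leq\gamma$, the point $x_\alpha^\beta$ belongs to this set, whence $x_\alpha^\beta\in A_{G(\gamma)}^{\gamma}$. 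Combining the two cases yields membership in the intersection over all $\gamma<\lambda$. Together with the previous paragraph this produces a point of $\bigcap_{\gamma<\lambda}A_{G(\gamma)}^{\gamma}$ not in $A_{\alpha,\delta}^{\beta}$, proving the claim for the arbitrary $\alpha<\kappa$.

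There is no serious obstacle here; the content is entirely in choosing the right witness $x_\alpha^\beta$ and in remembering to invoke monotonicity (II) to pass from the index $f(\beta)$ appearing in Lemma \ref{one}(ii) down to the arbitrary $\delta\leq f(\beta)$. The only point deserving care is the case analysis: one must use clause (ii) below $\beta$ and clause (i) at and above $\beta$, so that every coordinate $\gamma<\lambda$ is covered.
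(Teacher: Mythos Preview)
Your proof is correct and follows essentially the same approach as the paper: exhibit $x_\alpha^\beta$ as an explicit witness, use clauses (i) and (ii) of Lemma \ref{one} to place it in $\bigcap_{\gamma<\lambda}A_{G(\gamma)}^{\gamma}$, and use clause (ii) together with monotonicity (II) to exclude it from $A_{\alpha,\delta}^{\beta}$. The paper compresses the membership argument into the single line ``$\{x_\alpha^{\gamma}\mid\gamma<\lambda,\ \alpha<\kappa\}\subseteq \bigcap_{\gamma<\lambda}A_{G(\gamma)}^{\gamma}$'', whereas you spell out the case split on $\gamma$ relative to $\beta$ explicitly, but the content is identical.
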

\begin{proof}
By Lemma \ref{one} (i) and (ii), $\{x_\alpha^{\gamma}\, |\,\gamma<\lambda, \alpha<\kappa\}\subseteq \bigcap_{\gamma<\lambda}A_{G(\gamma)}^{\gamma}$ and $x_{\alpha}^{\beta}\notin A_{\alpha, f(\beta)}^{\beta}$. Hence $x_{\alpha}^{\beta}\notin A_{\alpha, \delta}^{\beta}$ because $\delta\leq f(\beta)$.
\end{proof}


\begin{theorem}\label{lemfamilytwo}
Assume $\kappa\leq \lambda$ and that there is some $\lambda$-dominating system on $\kappa\times\lambda$ and $\minnon\geq\lambda$. Then $\Dbf_\lambda^\lambda\leqT\SNcal$. 
\end{theorem}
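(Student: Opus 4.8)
The plan is to establish $\Dbf_\lambda^\lambda\leqT\SNcal$ by exhibiting the reduction maps directly, where $\SNcal$ is viewed as the relational system $\la\SNcal,\SNcal,\subseteq\ra$ and $\Dbf_\lambda^\lambda$ as $\la\lambda^\lambda,\lambda^\lambda,\leq\ra$. Concretely, I would construct $\Psi_1:\lambda^\lambda\to\SNcal$ and $\Psi_2:\SNcal\to\lambda^\lambda$ such that, for every $f\in\lambda^\lambda$ and $X\in\SNcal$, $\Psi_1(f)\subseteq X$ implies $f\leq\Psi_2(X)$ (pointwise). This is exactly what the Tukey reduction requires, and the two auxiliary lemmas are tailored to supply each half: Lemma \ref{one} feeds the definition of $\Psi_1$, and Lemma \ref{two} drives the verification.

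For $\Psi_1$, given $f\in\lambda^\lambda$ I apply Lemma \ref{one} to obtain $G=G_f\in(\kappa\times\lambda)^\lambda$ satisfying conclusions (i)--(iii), and set $\Psi_1(f):=\bigcap_{\gamma<\lambda}A_{G_f(\gamma)}^{\gamma}$. Exactly as in the proof of Theorem \ref{lemfamily}, since $\bigcap_{\gamma<\lambda}A_{G_f(\gamma)}^{\gamma}\subseteq A_{G_f(\gamma)}^{\gamma}\in\Iwf_{f_\gamma}$ for every $\gamma$, this intersection is a strong measure zero set, so $\Psi_1(f)\in\SNcal$. For $\Psi_2$, given $X\in\SNcal$ and $\beta<\lambda$, I use $\SNcal\subseteq\Iwf_{f_\beta}$ together with the cofinality of $\la A_i^{f_\beta}\,|\,i\in\kappa\times\lambda\ra$ in $\Iwf_{f_\beta}$ (Definition \ref{defsuitmatrix}(III)) to choose, by AC over $\beta<\lambda$, a pair $(\alpha_\beta,\delta_\beta)\in\kappa\times\lambda$ with $X\subseteq A_{(\alpha_\beta,\delta_\beta)}^{\beta}$, and put $\Psi_2(X)(\beta):=\delta_\beta$.

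To verify the reduction, assume $\Psi_1(f)\subseteq X$ and fix $\beta<\lambda$. Then $\bigcap_{\gamma<\lambda}A_{G_f(\gamma)}^{\gamma}=\Psi_1(f)\subseteq X\subseteq A_{(\alpha_\beta,\delta_\beta)}^{\beta}$. By Lemma \ref{two} applied to $G=G_f$, the inequality $\delta_\beta\leq f(\beta)$ is impossible, since it would force $\bigcap_{\gamma<\lambda}A_{G_f(\gamma)}^{\gamma}\not\subseteq A_{\alpha_\beta,\delta_\beta}^{\beta}$, contradicting the displayed inclusion. Hence $f(\beta)<\delta_\beta=\Psi_2(X)(\beta)$. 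As $\beta<\lambda$ was arbitrary, $f\leq\Psi_2(X)$, which establishes $\Dbf_\lambda^\lambda\leqT\SNcal$.

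The genuine work has already been absorbed into Lemmas \ref{one} and \ref{two}: the recursive construction there, which exploits $\minnon\geq\lambda$ (and $\kappa<\lambda$) to keep the accumulated witness sets $\{x_\beta^{\gamma'}\}$ of size $<\lambda$ so that they remain inside the relevant Yorioka ideals, is the heart of the matter, and the second-coordinate domination of Lemma \ref{one}(iii) is precisely what lets $\Psi_2$ read off a value strictly above $f(\beta)$ from \emph{any} cover of $X$. Consequently the main obstacle is not a new construction but the bookkeeping that matches Lemma \ref{two} against the covers chosen by $\Psi_2$. The one point needing care is the boundary case $\kappa=\lambda$: since Lemma \ref{one} is stated for $\kappa<\lambda$, I would either invoke the lemmas in the strict case and observe that the remaining case reduces to it, or re-run the recursion directly; the reduction argument above is otherwise unchanged.
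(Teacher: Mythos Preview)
Your proposal is correct and follows essentially the same route as the paper: the paper defines $\Psi_1(f)=\bigcap_{\gamma<\lambda}A_{G_f(\gamma)}^{\gamma}$ via Lemma \ref{one}, defines $\Psi_2(B)$ by projecting a covering index $F_B\in(\kappa\times\lambda)^\lambda$ onto its second coordinate, and verifies the reduction using Lemma \ref{two} (phrased contrapositively rather than directly as you do). For the boundary case $\kappa=\lambda$ the paper simply cites Yorioka's original theorem, which is the ``re-run the recursion directly'' option you mention; your alternative of reducing to the strict case does not obviously work, so you should commit to the direct rerun or the citation.
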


\begin{proof}
When $\kappa=\lambda$, this is \cite[Thm 3.9]{Yorioka}$\,($with $A_{\alpha}^{\beta}:=A_{\alpha,\alpha}^{f_\beta})$. Assume $\kappa<\lambda$. For $B\in\SNcal$, choose some $F_B\in (\kappa\times\lambda)^\lambda$ such that $B\subseteq\bigcap_{\gamma<\lambda}A_{F_B(\gamma)}^\gamma$. Define $\Psi_2(B):=f_B$ where $f_B(\gamma):=F_B(\gamma)_1$ for every $\gamma<\lambda$. Now fix $f\in\lambda^\lambda$, then by Lemma \ref{one} and Lemma \ref{two} we can find some $G_f\in (\kappa\times\lambda)^\lambda$ fulfilling that, for each $g\in \lambda^\lambda$, for each $\beta<\lambda$, if $g(\beta)\leq f(\beta)$ then \[\bigcap_{\gamma<\lambda}A_{G_f(\gamma)}^{\gamma}\not\subseteq A_{\alpha,  g(\beta)}^{\beta}\,\,\textrm{for all}\,\,\alpha<\kappa.\]
Define $\Psi_1(f):=\bigcap_{\gamma<\lambda}A_{G_f(\gamma)}^{\gamma}$. 

Now assume that $f\not\leq f_B$. We will show that $\bigcap_{\gamma<\lambda}A_{G_f(\gamma)}^{\gamma}\not\subseteq B$. Since $f\not\leq f_B$ choose $\xi<\lambda$ such that $f(\xi)>f_B(\xi)$. Then $\bigcap_{\gamma<\lambda}A_{G_f(\gamma)}^{\gamma}\not\subseteq A_{F_B(\xi)}^{\xi}$. Thus $\bigcap_{\gamma<\lambda}A_{G_f(\gamma)}^{\gamma}\not\subseteq B$ because $B\subseteq\bigcap_{\gamma<\lambda}A_{F_B(\gamma)}^\gamma$.
\end{proof}

As a consequence, we get:

\begin{corollary}
With the same hypothesis as in Lemma \ref{lemfamilytwo}, $\cof(\SNcal)\geq\dfrak_{\lambda}^\lambda$ and $\add(\SNcal)\leq\bfrak_{\lambda}^{\lambda}=\cof(\lambda)$.
\end{corollary}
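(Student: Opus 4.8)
The plan is to read off both inequalities directly from the Tukey reduction $\Dbf_\lambda^\lambda\leqT\SNcal$ furnished by Theorem \ref{lemfamilytwo}, whose hypotheses are exactly those we are assuming. The only conceptual point is to recall that, when the $\sigma$-ideal $\SNcal$ is regarded as the relational system $\la\SNcal,\SNcal,\subseteq\ra$, its two associated cardinals are the additivity and the cofinality: a subfamily $E\subseteq\SNcal$ is $\subseteq$-unbounded precisely when it has no upper bound in $\SNcal$, which (since $\SNcal$ is closed under subsets and finite unions) happens iff $\bigcup E\notin\SNcal$; hence $\bfrak(\SNcal)=\add(\SNcal)$, and dually a $\subseteq$-dominating family is the same as a cofinal one, so $\dfrak(\SNcal)=\cof(\SNcal)$.

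With this identification in hand, I would invoke the monotonicity of the Tukey order noted in Section \ref{SecPre}: $\Rbf\leqT\Rbf'$ implies $\bfrak(\Rbf')\leq\bfrak(\Rbf)$ and $\dfrak(\Rbf)\leq\dfrak(\Rbf')$. Applying this with $\Rbf=\Dbf_\lambda^\lambda$ and $\Rbf'=\SNcal$ gives at once
\[\add(\SNcal)=\bfrak(\SNcal)\leq\bfrak(\Dbf_\lambda^\lambda)=\bfrak_\lambda^\lambda\quad\text{and}\quad\dfrak_\lambda^\lambda=\dfrak(\Dbf_\lambda^\lambda)\leq\dfrak(\SNcal)=\cof(\SNcal),\]
which are precisely the two asserted bounds $\add(\SNcal)\leq\bfrak_\lambda^\lambda$ and $\cof(\SNcal)\geq\dfrak_\lambda^\lambda$.

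It then remains to compute $\bfrak_\lambda^\lambda=\cof(\lambda)$. Here I would use Lemma \ref{basic}(i) applied to the directed set $S=\lambda$ (the cardinal $\lambda$ under its usual well-order, a directed partial order with no maximum), which yields $\bfrak_\lambda^\lambda=\bfrak(\lambda)$. Finally $\bfrak(\lambda)=\cof(\lambda)$, the cofinality of the ordinal $\lambda$, since for the linear order $\lambda$ a subset is $\leq$-unbounded exactly when it is cofinal, so the least cardinality of an unbounded subset is the cofinality of $\lambda$. I do not expect any genuine obstacle: all the substance lives in Theorem \ref{lemfamilytwo}, and the corollary is a bookkeeping translation of that reduction through the standard dictionary between ideals and relational systems, the only point requiring care being the identifications $\bfrak(\SNcal)=\add(\SNcal)$ and $\dfrak(\SNcal)=\cof(\SNcal)$.
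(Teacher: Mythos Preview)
Your proposal is correct and is exactly the argument implicit in the paper: the corollary is stated there without proof, simply as ``As a consequence, we get'', and the intended derivation is precisely the one you wrote---read off the inequalities from the Tukey reduction $\Dbf_\lambda^\lambda\leqT\SNcal$ of Theorem \ref{lemfamilytwo}, using $\bfrak(\SNcal)=\add(\SNcal)$, $\dfrak(\SNcal)=\cof(\SNcal)$, and Lemma \ref{basic}(i) for $\bfrak_\lambda^\lambda=\bfrak(\lambda)=\cf(\lambda)$.
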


\section{Model for the cardinal invariants associated with  $\SNcal$}\label{sectionmain}

In this section, we prove Theorem \ref{introtheorem}. But first we need the two following lemmas.

The next lemma shows that a cofinal family in $\Iwf_f$ is produced by a localizating family and a dominating family. 

\begin{lemma}[{\cite[Thm. 3.12]{CM}}]\label{lemconst}
Let $f\in \omega^{\omega}$ be an increasing function. Then there is some definable function $\Psi^f:\omega^{\uparrow\omega}\times \Swf(\omega,\id)\to \Iwf_{f}$ such that, if  
\begin{enumerate}[(i)]
    \item $S\subseteq \Swf(\omega,\id)$ is a localizing family i.e, for any $x\in\omega^\omega$ there is some $\varphi\in S$ such that $x\in^*\varphi$, and 
    \item $D\subseteq \omega^{\uparrow\omega}$ is a dominating family, 
\end{enumerate}
then  $\{\Psi^f(d,\varphi)\,|\,d\in D\textrm{\ and\ }\varphi\in S\}$ is cofinal in $\Iwf_f$.
\end{lemma}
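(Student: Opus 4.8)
The plan is to reduce the problem to producing a cofinal family among the \emph{generating} sets of $\Iwf_f$, and then to build such a family by coding each generator out of one dominating function (to control the heights) together with one slalom (to control the strings). Recall that $X\in\Iwf_f$ iff $X\subseteq[\sigma]_\infty$ for some $\sigma\in(2^{<\omega})^\omega$ with $\hgt_\sigma\gg f$; hence it suffices to define $\Psi^f$ so that its range is cofinal among these sets $[\sigma]_\infty$. Fixing once and for all a recursive coding of $2^{<\omega}$ by $\omega$, the idea is that the dominating coordinate $d\in\omega^{\uparrow\omega}$ determines a fast-growing length function and a block structure on $\omega$, while the slalom coordinate $\varphi\in\Swf(\omega,\id)$ (whose $i$-th value has size $\le i$) selects, at each coordinate, a small set of binary strings of the prescribed length.

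First I would define $\Psi^f(d,\varphi)$ precisely. Using $f$ and $d$ I set a length function $\ell^{f,d}$ that is $\gg f$; the natural choice is $f$ evaluated at a superpolynomial rescaling, so that $\forall k\,(f\circ\pw_k\le^*\ell^{f,d})$ holds automatically by the definition of $\ll$, twisted by $d$ so that a larger $d$ yields a coarser scale. At coordinate $i$ I decode $\varphi(i)$ into a set $\Phi_i$ of at most $i$ strings of length $\ell^{f,d}(i)$, let $\sigma^{d,\varphi}$ enumerate the pairs so that each such string occurs as a value, and put $\Psi^f(d,\varphi):=[\sigma^{d,\varphi}]_\infty$. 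Two verifications are then routine: $\hgt_{\sigma^{d,\varphi}}\gg f$, so $\Psi^f(d,\varphi)\in\Iwf_f$; and the whole assignment is arithmetical in $f,d,\varphi$, so $\Psi^f$ is definable.

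Next I would prove cofinality: given $X\subseteq[\tau]_\infty$ with $\hgt_\tau\gg f$, I must find $d\in D$ and $\varphi\in S$ with $[\tau]_\infty\subseteq\Psi^f(d,\varphi)$. From $\tau$ I read off (a) a growth function $e_\tau\in\omega^{\uparrow\omega}$ recording the scale at which the heights of $\tau$ become large, and (b) a code function $c_\tau\in\omega^\omega$ listing the codes of suitable truncations of the strings $\tau(n)$. Since $D$ is dominating I take $d\in D$ above $e_\tau$; the purpose of the twist in $\ell^{f,d}$ is that, along the induced block structure, the construction's lengths become $\le\hgt_\tau$ at the matched coordinates while remaining $\gg f$, so every relevant $\tau(n)$ can be truncated to a designated length. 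Since $S$ is localizing I take $\varphi\in S$ with $c_\tau\in^*\varphi$, so that for all large coordinates the corresponding truncation of a $\tau$-string lies in $\Phi_i$. It then follows that any $x$ extending $\tau(n)$ for infinitely many $n$ extends a designated string of $\sigma^{d,\varphi}$ at infinitely many coordinates, i.e.\ $x\in[\sigma^{d,\varphi}]_\infty$; hence $X\subseteq[\tau]_\infty\subseteq\Psi^f(d,\varphi)$.

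The hard part will be the scale-matching that makes a single $d$ and a single $\varphi$ work for an arbitrary $\tau$. The two demands on the construction's lengths pull in opposite directions: they must be $\gg f$ (to keep $\Psi^f(d,\varphi)$ inside $\Iwf_f$) yet no larger than $\hgt_\tau$ at the coordinates where we reuse $\tau$'s strings (so that truncation is possible), and the reindexing of $\tau$ into the construction's coordinates must preserve the quantifier $\exists^\infty$ so that every $x\in[\tau]_\infty$ is caught infinitely often within the width-$\le i$ budget of the slalom. The definition of $\ll$ through all the powers $\pw_k$ is precisely what creates enough slack to reconcile these demands once $d$ has been chosen to dominate $e_\tau$; organizing this reindexing and the distribution of $\tau$'s strings across the coordinates is the technical core of the argument, which is carried out in \cite[Thm. 3.12]{CM}.
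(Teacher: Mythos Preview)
The paper does not give its own proof of this lemma: it is stated with a citation to \cite[Thm.~3.12]{CM} and no argument, and the subsequent Lemma~\ref{lemconstmatrix} is introduced only with the remark ``The same proof actually yields.'' Your proposal is therefore not competing with a proof in the paper but rather sketching what the cited proof in \cite{CM} does.

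As a sketch of that argument, your outline is sound and matches the standard approach: reduce to cofinality among the generators $[\sigma]_\infty$ with $\hgt_\sigma\gg f$; use the dominating coordinate $d$ to set a length scale $\gg f$ and a block structure; use the slalom coordinate $\varphi$ to select finitely many strings per block; and for cofinality, given $\tau$, extract a growth function to be dominated by some $d\in D$ and a code function to be localized by some $\varphi\in S$. You correctly identify the delicate point---matching scales so that the construction's lengths are simultaneously $\gg f$ and $\le\hgt_\tau$ at the coordinates where $\tau$'s strings are reused, while preserving the $\exists^\infty$ quantifier under the reindexing---and you explicitly defer that technical core back to \cite[Thm.~3.12]{CM}. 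Since the paper itself defers the entire proof to the same reference, there is nothing to compare beyond noting that your sketch is faithful to the strategy.
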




The same proof actually yields:

\begin{lemma}\label{lemconstmatrix}
Let $M$ be a transitive model of ZFC with $f\in\omega^\omega\cap M$ increasing. If $d\in\omega^{\uparrow\omega}$ is dominating over $M$ and $\varphi\in\Scal(\omega,\id)$ is localizing over $M$, then $A\subseteq \Psi^f(d,\varphi)$ for all Borel $A\in\Iwf_f$ coded in $M$.
\end{lemma}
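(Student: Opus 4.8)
The plan is to rerun the cofinality argument behind Lemma~\ref{lemconst}, while tracking precisely which reals the dominating and localizing hypotheses are used against. For a single Borel set $A\in\Iwf_f$ those reals are read off from a code of $A$ and from $f$; hence, once $A$ and $f$ live in $M$, the relevant reals lie in $M$, and then one $d$ dominating over $M$ together with one $\varphi$ localizing over $M$ already performs the work that in Lemma~\ref{lemconst} was spread across a whole dominating family $D$ and a localizing family $S$.

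First I would secure a witness for $A\in\Iwf_f$ \emph{inside} $M$. Unfolding the definition, ``$A\in\Iwf_f$'' reads $\exists\sigma\,(A\subseteq[\sigma]_\infty\ \wedge\ \hgt_\sigma\gg f)$; here $A\subseteq[\sigma]_\infty$ is $\Pi^1_1$ in a Borel code of $A$ and in $\sigma$ (it says $\forall x\,(x\in A\to\exists^\infty m\,(\sigma(m)\subseteq x))$), while $\hgt_\sigma\gg f$ is arithmetic in $\sigma$ and $f$. Thus the whole statement is $\Sigma^1_2$ with parameters a code of $A$ and the function $f$, both of which lie in $M$. By Shoenfield absoluteness it reflects to $M$, so there is $\sigma\in(2^{<\omega})^\omega\cap M$ with $A\subseteq[\sigma]_\infty$ and $\hgt_\sigma\gg f$. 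In particular, the real coding $\sigma$ and the function $\hgt_\sigma$ both belong to $M$.

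Next I would isolate how $\Psi^f$ depends on its witnesses in the proof of Lemma~\ref{lemconst}. There $\Psi^f(d,\varphi)$ is produced as a set of the form $[\sigma_{d,\varphi}]_\infty$, and the cofinality argument shows, for a member of $\Iwf_f$ presented as $[\sigma]_\infty$ with $\hgt_\sigma\gg f$, that $[\sigma]_\infty\subseteq\Psi^f(d,\varphi)$ as soon as $d$ eventually dominates $\hgt_\sigma$ (together with the auxiliary functions $\hgt_\sigma\circ\pw_k$ used to certify $\hgt_\sigma\gg f$) and $\varphi$ localizes the real coding the block sequence $\la\sigma(n)\,|\,n<\omega\ra$. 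The only data entering this are $\sigma$, $\hgt_\sigma$ and $f$, all definable from the witness $\sigma$ and from $f$, hence all in $M$. I would then conclude: since $d\in\omega^{\uparrow\omega}$ is dominating over $M$ it eventually dominates $\hgt_\sigma$ and each $\hgt_\sigma\circ\pw_k$, and since $\varphi\in\Scal(\omega,\id)$ is localizing over $M$ it localizes the code of $\sigma$; feeding this into the implication extracted above yields $[\sigma]_\infty\subseteq\Psi^f(d,\varphi)$, whence $A\subseteq[\sigma]_\infty\subseteq\Psi^f(d,\varphi)$.

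The main obstacle is the middle step: extracting from the construction in \cite{CM} the exact function that $d$ must dominate and the exact real that $\varphi$ must localize, and confirming that the proof there actually establishes $[\sigma]_\infty\subseteq\Psi^f(d,\varphi)$ for \emph{every} pair $(d,\varphi)$ enjoying these two properties, rather than only for the pairs drawn from the fixed families $D$ and $S$. This is exactly the content of the remark that ``the same proof'' applies; granting it, the remaining steps are routine absoluteness bookkeeping and the defining properties of domination and localization over $M$.
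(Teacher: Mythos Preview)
Your proposal is correct and is precisely the intended elaboration of the paper's one-line ``proof,'' which consists solely of the remark that the same proof as for Lemma~\ref{lemconst} yields the statement. You make explicit the one point the paper leaves implicit: that a witness $\sigma$ for $A\in\Iwf_f$ can be found inside $M$ (via $\Sigma^1_2$-absoluteness), so that the reals against which $d$ and $\varphi$ must act all lie in $M$; after that, your reduction to the construction in \cite{CM} is exactly what the paper has in mind.
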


Now, we are ready to prove our main Theorem \ref{mainth2}.

\begin{theorem}\label{maintheorem}
Let $\kappa\leq\lambda$ be regular uncountable cardinals where $\kappa^{<\kappa}=\kappa$ and let $\lambda_1,\lambda_2$ be cardinals such that $\lambda^{<\lambda}=\lambda$, $\lambda\leq\lambda_1$, $\lambda_2^\lambda=\lambda_2$ and $\lambda_1^{\aleph_0}=\lambda_1$. Then there is a cofinality preserving poset that forces 

\vspace{0.3cm}
$(\mathrm{I})$ $\add(\Nwf)=\non(\Mwf)=\kappa\textrm{\ and\ }\cov(\Mwf)=\cof(\Nwf)=\lambda$.
\begin{multline*}
    (\mathrm{II}) \,\,\add(\SNcal)=\cov(\SNcal)=\kappa\leq\non(\SNcal)=\lambda\leq\cof(\SNcal)=\dfrak_\lambda=\dfrak_{\kappa\times\lambda}^\lambda=\lambda_2\\
   \textrm{\ and\ }\cfrak=\lambda_1.
\end{multline*}
\end{theorem}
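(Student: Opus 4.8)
The plan is to follow the scheme (P1)--(P3): a preparatory forcing fixing the ``higher'' arithmetic, a simple matrix iteration of restricted localization forcing growing an $\Iwf_f$ directed system on $\kappa\times\lambda$ for every increasing $f$, and a final bookkeeping of the Cichoń-type invariants that feeds into the $\SNcal$-invariants via (S1)--(S3) and Theorem \ref{mainth1}. First I would force with a $<\lambda$-closed generalized Cohen forcing to obtain a model in which $\dfrak_\lambda=\dfrak_{\kappa\times\lambda}^\lambda=\lambda_2$, while preserving cardinals, $\kappa^{<\kappa}=\kappa$, $\lambda^{<\lambda}=\lambda$, and adding no reals (so $\cfrak$ is untouched at this stage). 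The point of closure is that the value $\lambda_2$ then survives the ccc iteration of the next step: a ccc forcing cannot change $\dfrak_\lambda$ for regular uncountable $\lambda$, since any new $f\colon\lambda\to\lambda$ is, by ccc, pointwise captured by a ground-model function into $[\lambda]^{\le\aleph_0}$ and hence dominated by a ground-model function, while the ccc covering property prevents a small new dominating family; by Lemma \ref{lemax}, $\dfrak_{\kappa\times\lambda}^\lambda=\max\{\dfrak_\kappa^\lambda,\dfrak_\lambda^\lambda\}=\dfrak_\lambda$ is controlled simultaneously. The value $\cfrak=\lambda_1$ will be produced by the matrix itself, taken of size $\lambda_1$ (using $\lambda_1^{\aleph_0}=\lambda_1$).

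Next I would construct a simple matrix iteration $\Por=\langle\Por_{\alpha,\xi},\dot\Qor_{\alpha,\xi}\rangle$ of vertical height $\gamma$ with $\cf(\gamma)=\kappa$, whose iterands are restricted localization forcing $\Loc$ together with enough Cohen forcing, running $\lambda$ many stages horizontally. The condition $\cf(\gamma)=\kappa$ does double duty: by Theorem \ref{matsizebd} it forces $\non(\Mwf)\le\cf(\gamma)=\kappa\le\cov(\Mwf)$, and, via the Galvin--Mycielski--Solovay characterization of $\SNcal$, it makes each $(2^\omega)^{V_{\alpha_i,\pi}}$ (for a cofinal $\langle\alpha_i:i<\kappa\rangle$ in $\gamma$) strong measure zero in the extension, because cofinally many Cohen reals are added over each $V_{\alpha_i,\pi}$. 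Since by Lemma \ref{realint}(b) every real of $V_{\gamma,\pi}$ lies in some $V_{\alpha_i,\pi}$, this exhibits $2^\omega$ as a union of $\kappa$ members of $\SNcal$, so $\cov(\SNcal)\le\kappa$. Along the horizontal direction I would invoke Lemma \ref{lemconstmatrix}: each localization generic $\varphi$ and each dominating real $d$ over an intermediate model $M$ produce, through $\Psi^f$, a Borel set covering every $A\in\Iwf_f$ coded in $M$. Arranging the resulting $\Psi^{f}(d_\beta,\varphi_{\alpha,\beta})$ monotonically, as dense $G_\delta$ sets, and cofinally over $\kappa\times\lambda$ gives, in the final model and for every increasing $f$, an $\Iwf_f$ directed system on $\kappa\times\lambda$.

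The standard goodness/preservation analysis of such matrices (keeping the $\kappa$-sized family of vertical Cohen reals both unlocalized and $\Ed$-unbounded) yields the Cichoń values $\add(\Nwf)=\non(\Mwf)=\kappa$ and $\cov(\Mwf)=\cof(\Nwf)=\lambda$ of part (I). Part (II) is then formal. From (I), $\cov(\Nwf)=\kappa$ and $\non(\Nwf)=\lambda$, so (S1)--(S3) give $\kappa=\add(\Nwf)\le\add(\SNcal)$, $\lambda=\cov(\Mwf)\le\non(\SNcal)\le\non(\Nwf)=\lambda$, and $\kappa=\cov(\Nwf)\le\cov(\SNcal)$; with $\cov(\SNcal)\le\kappa$ from the previous step and $\add(\SNcal)\le\cov(\SNcal)$ this pins $\add(\SNcal)=\cov(\SNcal)=\kappa$ and $\non(\SNcal)=\lambda$. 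Because $\SNcal\subseteq\Iwf_f$ forces $\non(\SNcal)\le\non(\Iwf_f)$ for all $f$, we get $\minnon\ge\non(\SNcal)=\lambda$; feeding the directed systems into Lemma \ref{newversion} (legitimate since $\cov(\Mwf)=\dfrak=\lambda$) produces a $\lambda$-dominating system on $\kappa\times\lambda$, and Theorem \ref{mainth1} then squeezes $\dfrak_\lambda^\lambda\le\cof(\SNcal)\le\dfrak_{\kappa\times\lambda}^\lambda$, both equal to $\lambda_2$. Finally $\cfrak=\lambda_1$ and cofinality preservation come from the size and ccc-ness of the iteration together with the preparation.

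The hard part is the middle step: simultaneously (a) choosing the iterands and the $\Delta$-bookkeeping so that $\Por$ is a genuine simple matrix iteration forcing $\cov(\Mwf)=\cof(\Nwf)=\lambda$ \emph{exactly} (rather than $\lambda_1$) while $\add(\Nwf)=\non(\Mwf)=\kappa$ stays small, which leans on the preservation theory for localization forcing and for the vertical Cohen reals, and (b) threading a single monotone, dense-$G_\delta$, cofinal $\kappa\times\lambda$-indexed family through the restricted generics for all increasing $f$ at once. The most delicate point is the absoluteness in the covering argument: one must verify that $(2^\omega)^{V_{\alpha_i,\pi}}\in\SNcal$ holds in the \emph{final} model and not merely after the next Cohen extension, which is exactly where the Galvin--Mycielski--Solovay reformulation (shifting $(2^\omega)^{V_{\alpha_i,\pi}}$ by meager sets coded below $\gamma$) does the real work.
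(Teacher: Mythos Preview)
Your outline is workable but inverts the paper's geometry, and this swap forces several arguments to run through different channels than the ones the paper actually uses. In the paper the matrix has \emph{height} $\gamma=\lambda$ and \emph{horizontal length} $\pi=\lambda\kappa$ (so $\cf(\pi)=\kappa$), and $\cfrak=\lambda_1$ is arranged by a separate preparatory Cohen step $\Por_2=\Fn_{<\omega}(\lambda_1\times\omega,\omega)$ applied \emph{before} the matrix (there are in fact three preparatory steps: Cohen at $\lambda$, then at $\kappa$, then at $\omega$). You instead take height of cofinality $\kappa$ (and size $\lambda_1$, to manufacture $\cfrak$) and horizontal length $\lambda$. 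Consequently the roles trade places: in the paper $\cov(\Mwf)\ge\lambda$ is read off Theorem~\ref{matsizebd} via the height, while $\non(\Mwf)\le\kappa$ and $\cov(\SNcal)\le\kappa$ come from the horizontal FS iteration having cofinality $\kappa$; in your setup $\non(\Mwf)\le\kappa$ comes from Theorem~\ref{matsizebd}, $\cov(\SNcal)\le\kappa$ from your vertical GMS argument, and $\cov(\Mwf)\ge\lambda$ must come from the $\lambda$-length horizontal FS iteration (which you do not spell out---``standard goodness/preservation'' is doing a lot of work there). Most importantly, the paper's $\kappa\times\lambda$-indexed system $A^f_{\rho,\xi}$ arises naturally from writing each horizontal stage $\varepsilon<\lambda\kappa$ as $\lambda\rho+\xi$ and taking $A^f_{\rho,\xi}=\Psi^f(d,\varphi)$ at stage $\lambda(\rho_f+\rho)+\xi_f+\xi$; with only $\lambda$ horizontal stages you get at best a system indexed by $\lambda$, so your $\varphi_{\alpha,\beta}$-indexing over $\kappa\times\lambda$ is not explained and would have to be obtained by trivially padding a $\lambda$-indexed system before invoking Theorem~\ref{lemfamilytwo}.

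One genuine slip: the equality $\max\{\dfrak_\kappa^\lambda,\dfrak_\lambda^\lambda\}=\dfrak_\lambda$ you assert via Lemma~\ref{lemax} is not a ZFC identity; there is no general inequality $\dfrak_\kappa^\lambda\le\dfrak_\lambda^\lambda$. The paper handles this by the separate Step~2 preparation $\Por_1=\Fn_{<\kappa}(\lambda_2\times\lambda,\kappa)$, which forces $\dfrak_\kappa^\lambda=\lambda_2$ outright and is then preserved by the subsequent ccc steps. In your arrangement the conclusion still happens to hold (because after the $<\lambda$-closed preparation $2^\lambda=\lambda_2$, hence $\dfrak_\kappa^\lambda\le\lambda_2$, and ccc forcing preserves $\dfrak_\kappa^\lambda$), but you should argue this rather than present $\dfrak_{\kappa\times\lambda}^\lambda=\dfrak_\lambda$ as if it were automatic.
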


\begin{proof} \emph{Step 1.} We start with $\Por_0:=\Fn_{<\lambda}(\lambda_2\times\lambda,\lambda)$. $\Por_0$ is $\lambda^+$-cc and $<\lambda$-closed, and thus it preserves cofinalities, and $\Por_0$ forces $\dfrak_{\lambda}=2^\lambda=\lambda_2$. 

\emph{Step 2.} In $V^{\Por_0}$, let $\Por_1:=\Fn_{<\kappa}(\lambda_2\times\lambda,\kappa)$. When $\kappa<\lambda$, $\Por_1$ forces $\dfrak_{\kappa}^{\lambda}=2^\lambda=\lambda_2$ and $\dfrak_{\lambda}=\lambda_2$ because $\Por$ is $\lambda$-c.c (see e.g. \cite[Lemma 2.6]{C}); and if $\lambda=\kappa$, the same is forced by step 1.


\emph{Step 3.}  In $V^{\Por_0\ast\Por_1}$, let $\Por_2:=\Fn_{<\omega}(\lambda_1\times\omega,\omega)$, which forces $\cfrak=\lambda_1$ and $2^\lambda=\max\{\lambda_1,\lambda_2\}$. In particular, $\dfrak_{\kappa\times\lambda}^\lambda=\lambda_2$ because $\Por_2$ is ccc and by Lemma \ref{lemax}.

\emph{Step 4.} We work in $V_{0,0}:=V^{\Por_0\ast\Por_1\ast\Por_2}$. We define the simple matrix iteration of  height $\gamma:=\lambda$ and lenght $\pi:=\lambda\kappa$ where the matrix iteration at each interval of the form $[\lambda\rho,\lambda(\rho+1))$ for each $\rho<\kappa$ is defined as follows. For each $\varepsilon\in[\lambda\rho,\lambda(\rho+1))$, $\varepsilon>0:$
 if  $\varepsilon=\lambda\rho+\xi$ for some $\rho<\kappa$ and $\xi<\lambda$, put $\Delta(\varepsilon)=\xi+1$ and $\Qnm_\varepsilon:=\Loc^{V_{\Delta(\varepsilon), \varepsilon}}$. 

Set $\Por:=\Por_{\lambda,\lambda\kappa}$ and $V_{\alpha,\xi}:=V_{0,0}^{\Por_{\alpha,\xi}}$. We first prove that $\Por$ forces $\kappa\leq\add(\Nwf)$ and $\cof(\Nwf)\leq\lambda$. For each $0<\varepsilon<\lambda\kappa$ denote  by $\varphi^{\varepsilon}\in V_{\Delta(\varepsilon),\,\varepsilon+1}\cap\Scal(\omega,\id)$ the generic slalom over $V_{\Delta(\varepsilon),\varepsilon}$ added by   $\Qnm_{\Delta(\varepsilon),\,\varepsilon}=\Qnm_{\lambda,\varepsilon}=\Loc^{V_{\Delta(\varepsilon), \varepsilon}}$. Hence $V_{\lambda,\lambda\kappa}\models\kappa\leq\add(\Nwf)$ is a consequence of the following 

\begin{claim}[see e.g. {\cite[Claim 5.14]{CM}}]\label{claimb}
In $V_{\lambda,\lambda\kappa}$, each family of reals of size $<\kappa$ is localizated by some $\varphi^\varepsilon$.
\end{claim}

On the other hand, $\{\varphi^\varepsilon\,|\,0<\varepsilon<\lambda\kappa\}$ is a family of slaloms of size $\leq\lambda$ and, by Claim \ref{claimb}, any member of $V_{\lambda,\lambda\kappa}\cap\omega^\omega $ is localizated by some $\varphi^\varepsilon$. Hence $V_{\lambda,\lambda\kappa}\models\cof(\Nwf)\leq \lambda$.

On the other hand, $\Por$ adds $\kappa$-cofinally many Cohen reals by Lemma \ref{matsizebd}, so it forces  $\non(\Mwf)\leq\kappa$. By Theorem \ref{matsizebd}, $\Por$ forces $\cov(\Mwf)=\dfrak(\Ed)\geq \lambda$. Therefore, $\Por$ forces $\kappa=\add(\Nwf)=\non(\Mwf)$ and $\cov(\Mwf)=\cof(\Nwf)=\lambda$. Now, $\Por$ forces:

$\underline{\kappa\leq\add(\SNcal)}$ by (S1) from the introduction;

\vspace{0.15cm}

$\underline{\cov(\SNcal)\leq\kappa}$ because the lenght of the FS iteration on the top has cofinality $\kappa$ and it is well known that such cofinality becomes an upper bound of $\cov(\SNcal)$ (see e.g. \cite[Lemma 8.2.6]{BJ});

\vspace{0.15cm}

$\underline{\non(\SNcal)=\lambda}$ by (S3) from the introduction;

\vspace{0.15cm}

$\underline{\cof(\SNcal)=\lambda_2}$. Let $f\in V_{\lambda,\lambda\kappa}\cap\omega^\omega$ be an increasing function. Then, there are some  $\xi_f<\lambda$ and $\rho_f<\kappa$ such that $f\in V_{\xi_f,\varepsilon^f}$ with $\varepsilon^f=\lambda\rho_f+\xi_f>0$ by Lemma \ref{realint}. 
 
For $\rho<\kappa$ and $\xi<\lambda$ define $\varepsilon^f_{\rho,\xi}:=\lambda(\rho_f+\rho)+\xi_f+\xi$. Let $\dot{\varphi}_{\rho,\xi}^f$ be the $\Por_{\Delta(\varepsilon^f_{\rho,\xi}), \varepsilon^f_{\rho,\xi}+1}$-name of the slalom over $V_{\Delta(\varepsilon^f_{\rho,\xi}),\, \varepsilon^f_{\rho,\xi}}$ and let  $\dot{d}_{\rho,\xi}$ be the $\Por_{\Delta(\varepsilon^f_{\rho,\xi}),\, \varepsilon^f_{\rho,\xi}+1}$-name of some increasing dominating real over $V_{\Delta(\varepsilon^f_{\rho,\xi}),\varepsilon^f_{\rho,\xi}}$ added by $\Qnm_{\varepsilon^f_{\rho,\xi}}$.
Set \[A_{\rho,\,\xi}^{f}:=\Psi^f(\dot{d}_{\rho,\xi}^f,\dot{\varphi}_{\rho,\xi}^f)\, (\textrm{see Figure \ref{matrix}}).\]

\begin{figure}
\begin{center}
\includegraphics[scale=0.65]{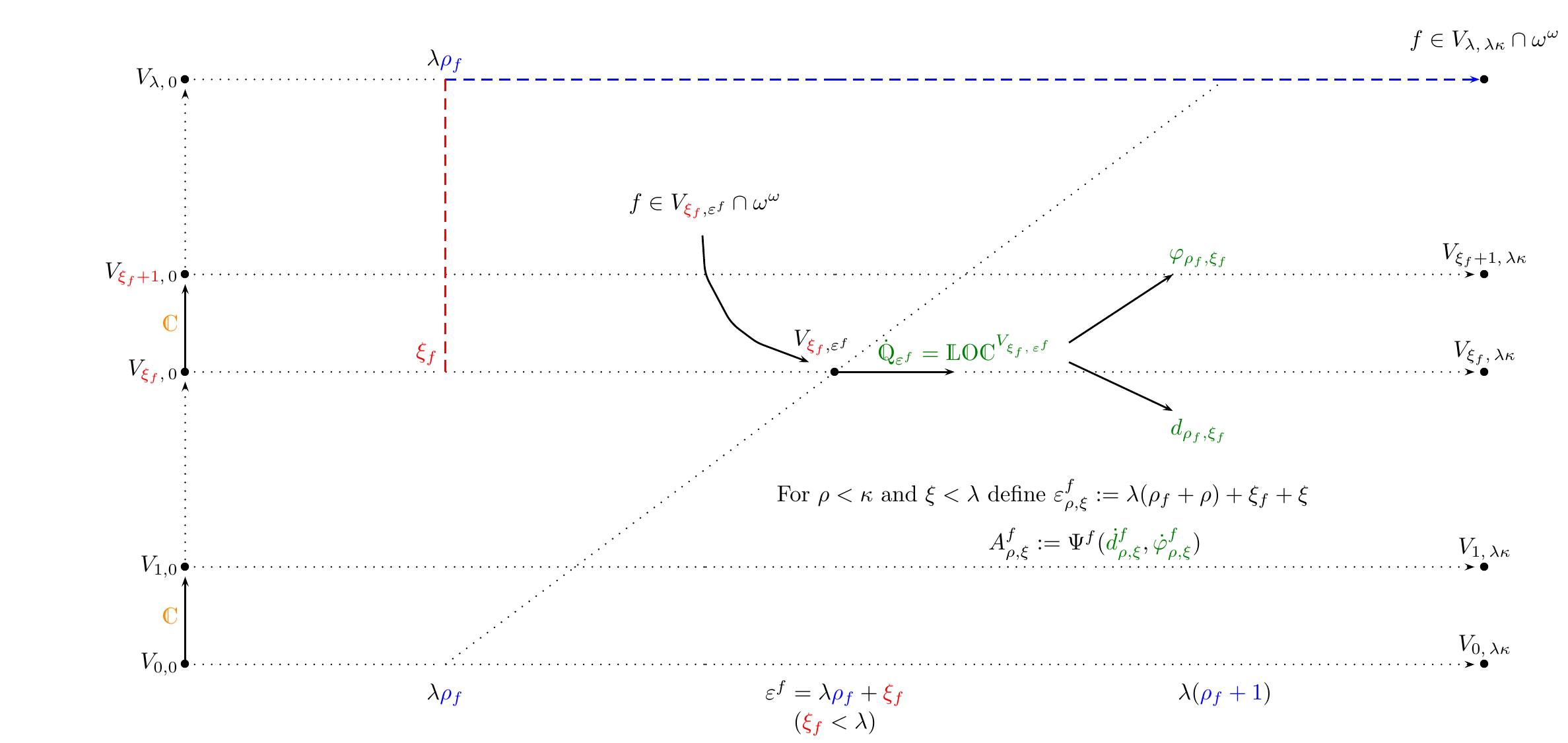}
\caption{Matrix iteration}
\label{matrix}
\end{center}
\end{figure}

\begin{claim}
$\la A_{\rho,\,\xi}^f\,|\,\rho<\kappa\textrm{\ and\ }\xi<\lambda\ra$ is an $\Iwf_f$ directed system.
\end{claim}
\begin{proof}
 It is clear that (I) and (III) follow by Lemma \ref{lemconstmatrix}.  To see (II), note that $\dot{\varphi}_{\rho,\xi}^f$ is an slalom over $V_{\Delta(\varepsilon^f_{\rho,\xi}),\,\varepsilon^f_{\rho,\xi}}$ and $\dot{d}_{\rho,\xi}^f$ is an increasing dominating real over $V_{\Delta(\varepsilon^f_{\rho,\xi}),\,\varepsilon^f_{\rho,\xi}}$, so $A\subseteq A_{\rho,\,\xi}^f$ for any $A\in\Iwf_f$ coded in $V_{\Delta(\varepsilon^f_{\rho,\xi}),\, \varepsilon^f_{\rho,\xi}}$ by Lemma \ref{lemconstmatrix}. In particular, $ A_{\rho',\,\xi'}^f\subseteq A_{\rho,\,\xi}^f$ if $(\rho',\xi')\leq(\rho,\xi)$. 
\end{proof}
We can choose a  $\lambda$-dominating system $\la A^{f_\gamma}\,|\,\gamma<\lambda\ra$ by Lemma \ref{newversion} because $\cov(\Mwf)=\dfrak=\lambda$. Therefore, in $V_{\lambda,\lambda\kappa}$,   $\cof(\SNcal)\leq\dfrak_{\kappa\times\lambda}^\lambda=\lambda_2$  by Theorem \ref{lemfamily}, and  since $\minnon=\lambda$, $\cof(\SNcal)\geq\dfrak_\lambda=\lambda_2$ by Theorem \ref{lemfamilytwo}.
\end{proof}

\section{Open problems} 
In Theorem \ref{lemfamilytwo} we prove $\Dbf_\lambda^\lambda\leqT\SNcal$ assuming  the existence of a $\lambda$-dominating system on $\kappa\times\lambda$. We ask if we could do the same omitting $\kappa$, concretely,
\begin{Questions}
Assume that $\lambda$ is an infinite cardinal and assume that $S$ has no maximum. Do we have $\Dbf_\lambda^\lambda\leqT\SNcal$ whenever the conditions below are satisfied?
\begin{itemize}
    \item[(i)] $\lambda\leqT S$,
    \item[(ii)] there is a $\lambda$-dominating system on $S$, and
    \item[(iii)] $\minnon\geq \lambda$
\end{itemize}
\end{Questions}
More generally, 
\begin{Questions}
Assume that $\lambda$ is an infinite cardinal and assume that $S$ has no maximum. Do we have $\Dbf_\lambda^\lambda\leqT\SNcal$ whenever the conditions below are satisfied?
\begin{itemize}
    \item[(i)] $\Dbf_\lambda^\lambda\leqT\Dbf_S^\lambda$,
    \item[(ii)] there is a $\lambda$-dominating system on $S$, and
    \item[(iii)] $\minnon\geq \lambda$
\end{itemize}
\end{Questions}
Concerning the consistency of the cardinals characteristics associated with $\SNcal$, the following summarises the current open questions.

\begin{Questions}
   Is it consistent with ZFC that 
   \begin{itemize}
       \item[(I)] $\add(\SNcal)<\min\{\cov(\SNcal),\non(\SNcal)\}$?
       \item[(II)] $\add(\SNcal)<\non(\SNcal)<\cov(\SNcal)<\cof(\SNcal)$?
       \item[(III)] $\add(\SNcal)<\cov(\SNcal)<\non(\SNcal)<\cof(\SNcal)$?   
   \end{itemize}
\end{Questions}

Any idea to solve Question (I) in the positive could be used to prove the consistency of (II) and (III), for example using a matrix iteration construction as in this paper.
As mentioned in the introduction, the author with Mej\'ia and Rivera-Madrid solved  Question (II) partially. 

In Theorem \ref{maintheorem} (Thereom \ref{introtheorem}) we answered Question (III) partially, but its consistency still remains open. In this situation, the main issue is  that tools to deal with $\add(\SNcal)$ are still unknown.

\subsection*{Acknowledgments}
The author is very thankful to professor D. Mej\'ia for all the guidance and support
provided during the research that precedes this paper. He offered a lot of his time for discussions that concluded in the results that are presented in this text. The author is also grateful to professor T. Yorioka for his multiple discussions at the Set Theory Seminar in the Departament of Mathematics at Shizuoka University.

{\small
\bibliography{left}
\bibliographystyle{alpha}
}

\end{document}